\documentclass[11pt]{article}
\usepackage[letterpaper,hmargin=1in,vmargin=1in]{geometry}
\usepackage{graphicx}
\usepackage{amsmath,amssymb,amsthm,mathtools}
\usepackage{url}
\usepackage{color}
\usepackage{cite}
\usepackage{xparse}
\usepackage{hyperref}
\hypersetup{colorlinks=true,linkcolor=[rgb]{0,0,0},citecolor=[rgb]{0,0,0},urlcolor=[rgb]{0,0,0}}
\usepackage{thmtools}
\usepackage{thm-restate}
\theoremstyle{plain}
\newtheorem{theorem}{Theorem}
\newtheorem{lemma}{Lemma}[section]

\newtheorem{corollary}[lemma]{Corollary}
\newtheorem{problem}[lemma]{Problem}

\theoremstyle{definition}
\newtheorem{definition}[theorem]{Definition}

\theoremstyle{remark}

\newcommand{\floor}[1]{\left\lfloor #1\right\rfloor}

\newcommand{\ang}[1]{\left\langle #1 \right\rangle}
\newcommand{\abs}[1]{\left\lvert #1 \right\rvert}
\newcommand{\RE}{\mathbb{R}}

\newcommand{\NN}{\mathbb{N}}

\newcommand{\ST}{\,:\,}
\newcommand{\inner}[2]{\ang{#1,#2}}

\newcommand{\etal}{\textit{et al.}}
\newcommand{\SP}{\kern+1pt}
\newcommand{\polar}{\circ}
\DeclareMathOperator{\Aff}{Aff}
\DeclareMathOperator{\conv}{conv}

\DeclareMathOperator{\interior}{int}
\DeclareMathOperator{\dist}{dist}
\NewDocumentCommand{\distHilb}{O{}}{\dist^{H}_{#1}}
\NewDocumentCommand{\distFunk}{O{}}{\dist^{F}_{#1}}
\newcommand{\Mconv}{\widehat{M}}   % convexified packing
\newcommand{\Mord}{M}              % ordinary packing
\title{Duality Bounds for Convexified Packing in Hilbert Geometry}

\author{%
	Sunil Arya\\
		Department of Computer Science and Engineering \\
		Hong Kong University of Science and Technology \\
		Clear Water Bay, Kowloon, Hong Kong\\
		arya@cse.ust.hk
		\and
	David M. Mount\\
		Department of Computer Science and \\
		Institute for Advanced Computer Studies \\
		University of Maryland \\
		College Park, Maryland 20742 \\
		mount@umd.edu
}

\date{}

\begin{document}

\maketitle

%-----------------------------------------------------------------------
\begin{abstract}
Let $G$ and $K$ be convex bodies in $\RE^d$, where $0 \in \interior G$ and $G \subset \interior K$. Given $\alpha > 0$, the Hilbert packing number $\Mord_H(G, K; \alpha)$ is the maximum cardinality of a set of points in $G$, each pair of which is separated by a distance of at least $\alpha$ in the Hilbert geometry defined by $K$. The Hilbert convexified packing number $\Mconv_H(G, K; \alpha)$ is the maximum length of a sequence of points in $G$, such that each point is separated by distance at least $\alpha$ from the convex hull of its predecessors. We prove a dimension-free primal--polar bound for convexified packing in Hilbert geometry. Letting $G^\polar$ and $K^\polar$ denote the polar bodies, we show that there exist absolute constants $C, c > 0$ such that, for every $\alpha > 0$,
\[
    \Mconv_H(G, K; \alpha) 
        ~ \leq ~ C \cdot \Mconv_H(K^\polar, G^\polar; c\alpha)^2 \Mord_H(K^\polar, G^\polar; c\alpha).
\]
As a direct corollary, we have
\[
    \Mconv_H(G, K; \alpha) 
        ~ \leq ~ C \cdot \Mord_H(K^\polar, G^\polar; c\alpha)^3.
\]
Thus, the primal convexified packing number is bounded by a fixed polynomial in the ordinary packing number for the reversed polar bodies, with absolute constants independent of the dimension. This is motivated by the duality conjecture for packing and covering numbers, which relates covering $G$ by $K$ to covering $K^\polar$ by $G^\polar$. Our results represent a key step in extending the work of Artstein, Milman, Szarek, and Tomczak-Jaegermann from normed spaces to Hilbert geometries.
\end{abstract}
%-----------------------------------------------------------------------

%=======================================================================
\section{Introduction} \label{sec:introduction}
%=======================================================================

Packing and covering numbers are fundamental measures of metric complexity. Given convex bodies $G$ and $K$ in $\RE^d$, the \emph{packing number} of $G$ by $K$, denoted $M(G,K)$, is the maximal number of disjoint translates of $K$ by elements of $G$, and the \emph{covering number}, $N(G,K)$, is the minimal number of translates of $K$ needed to cover $G$. In asymptotic convex geometry, the interaction with polarity gives rise to the duality theory of metric entropy and, equivalently, to comparisons between the entropy numbers of a compact linear operator and those of its adjoint. A central problem in this area is the duality conjecture for packing and covering numbers, inspired by the work of Pietsch \cite{Pie72} in the operator-theoretic context, which involves the relationship between covering numbers for $G$ by $K$ and covering numbers for the reversed polar bodies, $K^\polar$ by $G^\polar$. It is conjectured that there exist absolute constants $a,b>1$ such that, for any two centrally symmetric convex bodies $G$ and $K$ in any dimension,
\[
    b^{-1} \log N(K^\polar,a G^\polar)
        ~ \leq ~ \log N(G, K)
        ~ \leq ~ b \log N(K^\polar, a^{-1} G^\polar).
\]

For centrally symmetric convex bodies, an early (dimension-sensitive) polarity estimate was obtained by K\"onig and Milman~\cite{KoM87}. The search for a dimension-free form of entropy duality led to work of Bourgain, Pajor, Szarek, Tomczak-Jaegermann, and others~\cite{BPS89}. Artstein, Milman, and Szarek proved the conjectured duality when one of the bodies is an ellipsoid~\cite{AMS04a}. The general theory was further developed by Artstein, Milman, Szarek, and Tomczak-Jaegermann through the introduction of the useful concept of \emph{convexified packings}~\cite{AMS04b} (defined in Section~\ref{sec:preliminaries}), denoted by $\Mconv$.

As emphasized by Artstein {\etal}, convexified packing provides a useful way to divide the duality problem into two distinct stages:
\begin{enumerate}\setlength{\itemsep}{-0.5ex}\setlength{\parsep}{0pt}
\item[(i)] a \emph{duality stage}, relating convexified packing in the original space to packing quantities in the dual space and
\item[(ii)] a \emph{geometric stage}, comparing convexified and ordinary packing within each space separately.
\end{enumerate}
The first stage provides a bridge between the primal and dual geometries and is therefore where polarity enters. The second stage consists of separate geometric comparisons on the primal and dual sides. This division allows the duality stage to be studied independently of the comparisons needed to recover ordinary packing or covering numbers.

A key element in the analysis of convexified packings is separating a point from a convex set. In normed spaces, this is based on linear separation, in the sense that a single linear functional suffices to separate a point from a suitable neighborhood of a convex hull. The resulting functional witnesses can then be organized into a convexified packing in the polar body. In particular, for centrally symmetric convex bodies $K, G\subseteq\RE^d$, Artstein, Milman, Szarek, and Tomczak-Jaegermann \cite[Theorem~2]{AMS04b} proved the dimension-free bound
\[
    \widehat M(G,K)
        ~ \leq ~ \widehat M(K^\polar,G^\polar/2)^2.
\]
This establishes the essential polarity relation needed for the duality stage of their analysis.

The broad goal motivating this paper is to develop an analogous duality program in Hilbert geometry. This setting presents significant additional difficulties. A normed metric is translation invariant, and its unit balls are translates of one fixed convex body. By contrast, the shapes of metric balls in a Hilbert geometry vary based on their center points. Moreover, the Hilbert metric is intrinsically two-sided, since it is the symmetrization of two asymmetric Funk metrics. Consequently, Hilbert separation is not naturally represented by a single polar functional.

In this paper, we establish the first stage (the duality stage) of this program in Hilbert geometry. We develop a Hilbert-geometric replacement for the functional-witness argument used in normed spaces and prove a dimension-free primal--polar estimate for convexified packing. This provides the Hilbert-geometric counterpart of the convexified-packing polarity estimate that underlies the normed-space approach. Our result does not yet give a full duality theorem for ordinary packing or covering numbers. Completing the program would require the second stage (the geometric stage), that is, a comparison between ordinary and convexified packing within a single Hilbert geometry, with constants independent of the dimension. 

Every convex body $K$ in $\RE^d$ defines a \emph{Hilbert geometry} over its interior. This geometry is based on a metric, which arises as the symmetrization of the Funk weak metric on $K$. The Hilbert metric has several useful properties. It is projectively invariant and defines a Finsler structure. When $K$ is an ellipsoid, it coincides with the Beltrami--Klein model of hyperbolic geometry, and when $K$ is a simplex, it is isometric to a normed vector space. More generally, every finite-dimensional normed space arises as a rescaled local limit of some Hilbert geometry. (See, e.g., \cite{Bus55, Fai24, FoK05, PaT14a} for further information.)

The Hilbert-geometric duality problem is therefore not merely analogous to its normed-space counterpart. 
Since every finite-dimensional normed space arises as a rescaled local limit of a Hilbert geometry, a dimension-free duality theorem for ordinary Hilbert packing or covering, formulated uniformly over Hilbert geometries, would imply the corresponding normed-space theory through this local-limit correspondence. 
Thus, the Hilbert problem should be viewed as a genuine extension of the classical entropy-duality problem.

Throughout the paper, let $G$ and $K$ be convex bodies in $\RE^d$ satisfying
\[
    0 \in \interior G
    \qquad\text{and}\qquad
    G \subset \interior K.
\]
We regard $G$ as a compact subset of the Hilbert geometry defined by $K$. For $\alpha > 0$, define the (ordinary) \emph{Hilbert packing number} $\Mord_H(G, K; \alpha)$ to be the maximum cardinality of an $\alpha$-separated subset of $G$ in this geometry. The corresponding \emph{Hilbert convexified packing number}, denoted $\Mconv_H(G, K; \alpha)$, is the maximum length of a sequence of points in $G$ such that each point has Hilbert distance at least $\alpha$ from the convex hull of its predecessors. (See Section~\ref{sec:preliminaries} for full definitions.) Every convexified packing is an ordinary packing, and hence
\[
    \Mconv_H(G, K; \alpha)
        ~ \leq ~ \Mord_H(G, K; \alpha).
\]

Our focus is on the relationships between Hilbert packing numbers for $G$ in the geometry defined by $K$ and the corresponding packing numbers for the reversed polar pair. The assumptions on $G$ and $K$ imply $0 \in \interior K^\polar$ and $K^\polar \subset \interior G^\polar$. Thus $K^\polar$ is a compact subset of the Hilbert geometry defined by $G^\polar$. Our main result bounds the convexified packing complexity of $(G, K)$ by packing quantities for $(K^\polar, G^\polar)$.

%-----------------------------------------------------------------------
\begin{restatable}[Main theorem]{theorem}{thmMain}
\label{thm:main}
There exist absolute constants $C,c>0$ such that, for every pair of convex bodies $G$ and $K$ in $\RE^d$ satisfying $0 \in \interior G$ and $G \subset \interior K$, and every $\alpha > 0$,
\[
    \Mconv_H(G, K; \alpha)
        ~ \leq ~ C \cdot \Mconv_H(K^\polar, G^\polar; c \alpha)^2 \Mord_H(K^\polar, G^\polar; c \alpha).
\]
\end{restatable}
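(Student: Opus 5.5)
Given a convexified $\alpha$-packing $x_1,\dots,x_N$ in $G$ with respect to $K$, the plan is to produce separating witnesses in the polar. Then we organize these witnesses into a convexified packing in $K^\polar$ and an ordinary packing in $K^\polar$, both relative to the Hilbert geometry of $G^\polar$.

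\textbf{Step 1: Funk separation.} The Hilbert distance is the symmetrization of two Funk distances: $\distHilb[K](x,y)=\tfrac12(\distFunk[K](x,y)+\distFunk[K](y,x))$. Hence if $\distHilb[K](x_i,\conv\{x_1,\dots,x_{i-1}\})\ge\alpha$, then for each $y$ in the hull at least one of the two Funk distances is at least $\alpha$. We first show that one direction wins uniformly. Either $x_i$ is $\alpha/2$-far in forward Funk distance from the whole hull $P_{i-1}$, or $P_{i-1}$ is $\alpha/2$-far from $x_i$ in reverse Funk distance. To prove this, note that the forward Funk ball $\{y:\distFunk[K](x_i,y)<\alpha/2\}$ is convex; it is a homothet $x_i+(1-e^{-\alpha/2})(K-x_i)$. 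The reverse Funk ball is likewise convex, a homothet of $K$ about $x_i$ with ratio $e^{\alpha/2}-1$ intersected with $K$. Their union need not be convex, so we cannot directly separate the hull from it. Instead we use a minimax or Helly-type argument. If the hull met both shrunken balls (say at $y$ and $z$), then a suitable point on segment $[y,z]$ would be Hilbert-close to $x_i$ with a loss of constant factor $c$, by an estimate of Hilbert distance along chords. We would establish this as a lemma. After losing the constant, we obtain a hyperplane, i.e.\ a functional $f_i\in K^\polar$ (normalized via the supporting hyperplane of $K$), that separates $x_i$ from $P_{i-1}$. Concretely, $\langle f_i,x_i\rangle$ and $\sup_{P_{i-1}}\langle f_i,\cdot\rangle$ differ in the Funk-ratio sense by $e^{c\alpha}$. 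The two cases give two types of indices. By pigeonhole at least $N/2$ indices share a type, and the reverse type is handled by a symmetric argument.

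\textbf{Step 2: Witnesses in the polar.} By the polar identity for Funk distances ($\distFunk[K](x,y)$ is expressible through $\sup_{f\in K^\polar}$ of cross-ratio type quantities), the witness $f_i$ lies in $K^\polar$. Each $x_j\in G$ is a functional on the polar side bounded by $1$ on $G^\polar$. The separation inequalities read $\langle f_i,x_i\rangle \ge e^{c\alpha}\langle f_i,x_j\rangle$-type bounds for $j<i$. This is a triangular pattern of the AMS type. Following \cite[Theorem~2]{AMS04b}, we extract from such a triangular array a long subsequence of $f$'s that forms a convexified packing in $K^\polar$ for the $G^\polar$-Hilbert geometry. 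This extraction is where the square $\Mconv_H(K^\polar,G^\polar;c\alpha)^2$ arises. In their argument, a greedy chain decomposition shows that the array cannot be long in both the "convexified in $f$" and "convexified in $x$ reversed" directions. Here the second direction is again converted to a polar convexified packing by duality, after reversing order.

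\textbf{Step 3: The extra factor and the main obstacle.} In the normed case the witness functional can be evaluated against a fixed polar body. In Hilbert geometry, however, converting the linear inequalities into Hilbert separation in $G^\polar$ requires the $f_i$ to lie at comparable "Funk heights," i.e.\ comparable values of the gauge $\|f_i\|_{G^\polar}$. We therefore bucket the witnesses by their position in $K^\polar$. We cover $K^\polar$ by Hilbert balls of radius $c\alpha$ in the $G^\polar$ geometry; the number of balls is at most $\Mord_H(K^\polar,G^\polar;c\alpha)$ by maximality of a packing. Within one bucket the Funk metric is comparable to a normed metric up to constants, so the AMS extraction applies. A pigeonhole over buckets loses exactly the factor $\Mord_H$, giving $N\le C\,\Mconv_H^2\,\Mord_H$. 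I expect the hardest part to be Step 1, the uniform one-sided separation with a constant loss, together with showing that localization to one bucket makes the two-sided Hilbert condition behave linearly.
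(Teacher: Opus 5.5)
\textbf{There is a genuine gap in Step~1, and the rest of your plan depends on it.} The dichotomy ``either $x_i$ is forward-Funk far from all of $P_{i-1}$, or $P_{i-1}$ is reverse-Funk far from $x_i$'' is false with any constant loss. So is the chord lemma you propose to prove it with. Take the triangle $K=\{(u,v)\ST u\geq-\varepsilon,\ v\geq-\varepsilon,\ u+v\leq 1\}$ with facet functions $\ell_1=u+\varepsilon$, $\ell_2=v+\varepsilon$, $\ell_3=1-u-v$. Take the points $x=0$, $y=\tfrac{\varepsilon}{2}(1,-\varepsilon)$ and $z=\tfrac{\varepsilon}{2}(\varepsilon,-1)$. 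Maximizing over facets gives
\[
\distFunk[K](x,y)=-\log(1-\varepsilon/2)
\qquad\text{and}\qquad
\distFunk[K](z,x)=\log(1+\varepsilon/2),
\]
both $O(\varepsilon)$. Yet every $w=\tfrac{\varepsilon}{2}(a,-b)\in[y,z]$ has $a+b=1+\varepsilon$ and $a\leq 1$, so
\[
  2\,\distHilb[K](x,w) ~\geq~ \log\frac{\ell_1(w)\,\ell_2(x)}{\ell_1(x)\,\ell_2(w)} ~=~ \log\frac{2+a}{1-\varepsilon+a} ~\geq~ \log\frac{3}{2}.
\]
Also $\distHilb[K](y,z)\geq\tfrac12\log(2-\varepsilon)$. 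Hence $\ang{y,z,x}$ is a Hilbert-convexified $\alpha$-separated sequence with $\alpha=\tfrac12\log\tfrac32$ for every small $\varepsilon$. Still, both one-sided Funk separations of $x$ from $[y,z]$ vanish as $\varepsilon\to 0$. Three consequences follow:
\begin{itemize}
\item No single $f\in K^\polar$ separates $x$ from the hull with ratio $e^{c\alpha}$: forward separation fails at $y$, and reverse separation fails at $z$.
\item No point of $[y,z]$ is Hilbert-close to $x$, so the chord lemma fails.
\item The separation is certified only by the pair $(\ell_2,\ell_1)$ acting together.
\end{itemize}

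\textbf{The missing idea.} The separation certificate must be a \emph{pair} of polar witnesses chosen uniformly over the hull, and this can be done with no constant loss. Normalize at $x_j$, so that $\mathcal{A}^K_{x_j}=\{f_y/f_y(x_j)\ST y\in K^\polar\}$ is compact and convex. Apply Sion's minimax theorem to $p(c)-e^{2\alpha}q(c)$, which is affine in $(p,q)$ and in $c\in\conv\{x_1,\dots,x_{j-1}\}$. This yields $(y_j,z_j)$ with $\Phi^K_{y_j,z_j}(x_j)-\Phi^K_{y_j,z_j}(c)\geq 2\alpha$ for every $c$ in the hull.

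\textbf{Steps~2--3.} Once the witnesses are pairs, these steps must be rebuilt. As written they are not checkable in any case:
\begin{itemize}
\item You never show how the linear inequalities become Hilbert separation in $G^\polar$.
\item The claim that the Funk metric is comparable to a norm on a bucket is unsupported.
\end{itemize}
In the paper the bound is assembled as follows.
\begin{itemize}
\item \emph{First convexified factor.} The witnesses are bucketed by the scale $s_j=\Phi^K_{y_j,z_j}(x_j)$. The range of $s_j$ is controlled by a chord of $K^\polar$ of Hilbert length at least $\Omega_2(G,K)/4$, which bounds the number of buckets by $O(\Mconv_H(K^\polar,G^\polar;\alpha))$.
\item \emph{Diagonal packing.} Within a bucket, the reversed pairs form a diagonally convexified packing. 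This follows from
\[
\distHilb[G^\polar](y,y')+\distHilb[G^\polar](z,z')\geq\tfrac12\abs{\Phi^K_{y,z}(x)-\Phi^K_{y',z'}(x)}.
\]
\item \emph{Remaining two factors.} Both coordinates must be averaged with the same coefficients. An ordinary $\eta/4$-net on the $y$-coordinates, combined with convexity of Hilbert balls, forces the $z$-coordinates in each block to be convexified $\eta/2$-separated.
\end{itemize}
So the $\Mord_H$ factor comes from this coefficient synchronization across the two witnesses, not from localizing to a near-normed regime.
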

%-----------------------------------------------------------------------

The constants are independent of the dimension, and the proof gives $C = 11$ and $c = 3/16$. Since convexified packing is bounded above by ordinary packing, the theorem immediately yields
\[
    \Mconv_H(G, K; \alpha)
        ~ \leq ~ C \cdot \Mord_H(K^\polar, G^\polar; c \alpha)^3.
\]
Thus the primal convexified packing number is bounded by a fixed (dimension-independent) polynomial in the ordinary packing number of the reversed polar pair.

The mixed form of Theorem~\ref{thm:main} reflects a remaining difficulty in the Hilbert setting. Two of the factors on the polar side are convexified packing numbers, whereas the third is an ordinary packing number. The ordinary factor enters only in the final stage of the proof, where one coordinate of a sequence of polar witness pairs is controlled by an ordinary packing net. We do not know whether this factor can be replaced by another convexified packing factor.

Nevertheless, within the two-stage framework described above, Theorem~\ref{thm:main} supplies the principal duality step in the broader program. Its proof develops a substantially more elaborate Hilbert-geometric counterpart of the single-functional argument used in normed spaces, adapted to the intrinsically two-sided nature of Hilbert distance. Thus, within the convexified-packing approach, the theorem resolves the interaction between the primal and polar Hilbert geometries. For example, suppose that ordinary Hilbert packings could be bounded polynomially by Hilbert convexified packings, with dimension-free constants. 
Combining such an intrinsic comparison with Theorem~\ref{thm:main} and the inequality $\Mconv_H \leq \Mord_H$ would yield a dimension-free polynomial duality theorem for ordinary packing. The remaining comparison would concern only one Hilbert geometry at a time, rather than the interaction between a primal geometry and its polar. We discuss this implication more precisely in Section~\ref{sec:conclusion}.

The dimension-free character of Theorem~\ref{thm:main} contrasts with the authors' earlier polarity theorem for covering numbers in Hilbert geometry, whose constants grow exponentially with the dimension \cite{ArM26b}. Polarity also appears elsewhere in Hilbert and Funk geometries. For example, the Hilbert horofunction boundary is governed in part by the extreme sets of the polar body \cite{Wal08}, and Faifman established projective-polar duality for several geometric invariants in Funk geometry~\cite{Fai24}. The present result concerns a different aspect of polarity: the metric complexity of a compact set and that of its reversed polar pair.

%-----------------------------------------------------------------------
\subsection*{Overview of the Proof}
%-----------------------------------------------------------------------

The proof begins with a polar representation of the Hilbert metric. Each pair of polar witnesses determines a logarithmic pair-potential, which we denote by $\Phi$. The Hilbert distance is recovered by maximizing the change of such a potential between the two points.

Let $\ang{x_1, x_2, \dots}$ be a sequence of points in $G$ such that for all $j \geq 2$, $x_j$ is at Hilbert distance at least $\alpha$ from the convex hull of the preceding points. Pointwise separation provides a pair of polar witnesses for each point of the preceding hull, but this pair may depend on the point. After normalizing the affine functions $f_y$ at $x_j$, we apply a minimax argument to obtain a single pair $y_j, z_j \in K^\polar$ whose associated pair-potential $\Phi_j$ satisfies
\[
    \Phi_j(x_j)-\Phi_j(c)
        ~ \geq ~ 2 \alpha \qquad\text{for every $c \in \conv\{x_1, \dots, x_{j-1}\}$}.
\]
This finite-hull two-pole separation lemma is the Hilbert-geometric analog of separation by one linear functional.

The witness pair associated with $x_j$ has a relative normalization measured by the value $\Phi_j(x_j)$. We prove a polar oscillation estimate that bounds the total range of these values by a convexified packing number for $(K^\polar, G^\polar)$. This permits a decomposition of the witness pairs into a bounded number of scale classes. Within each class, the pairs form a special packing in $K^\polar \times K^\polar$, which we call a \emph{diagonally convexified packing}, in which both coordinates must be convexified using the same coefficients.

Finally, we prove a mixed product estimate for diagonal packings. One polar coordinate is partitioned using an ordinary Hilbert packing net. Within each part, the convexity of Hilbert balls forces the other coordinate to form a convexified packing. Quantitatively, we obtain
\[
    \Mconv_{\mathrm{diag},H}(K^\polar, G^\polar; \eta)
        ~ \leq ~ \Mord_H \!\left( K^\polar, G^\polar; \frac{\eta}{4} \right) \Mconv_H \!\left( K^\polar, G^\polar; \frac{\eta}{2} \right).
\]
Combining this estimate with the scale decomposition proves Theorem~\ref{thm:main}.

\bigskip

The remainder of the paper is organized as follows. Section~\ref{sec:preliminaries} introduces the Funk and Hilbert metrics, their polar-potential representations, and the packing quantities used throughout. Section~\ref{sec:two-pole-tools} develops normalized nonnegative affine functions, proves the finite-hull two-pole separation lemma, and establishes the polar oscillation estimate. Section~\ref{sec:diagonal-convexification} introduces diagonal convexified packing, proves the scale reduction and the mixed diagonal product estimate, and establishes Theorem~\ref{thm:main}. Section~\ref{sec:conclusion} discusses the remaining steps toward ordinary packing duality and formulates the fully convexified diagonal product problem.

%=======================================================================
\section{Preliminaries} \label{sec:preliminaries}
%=======================================================================

We present the notation and terminology used throughout the paper. We use $\inner{\cdot}{\cdot}$ to denote the standard inner product on $\RE^d$. Let $0$ denote the origin. A \emph{convex body} in $\RE^d$ is a compact convex set with nonempty interior. Given a convex set $K$, we denote its interior by $\interior K$. Throughout, $G$ and $K$ denote convex bodies in $\RE^d$ such that $0 \in \interior G$ and $G \subset \interior K$. 

Given a nonempty set $K \subset \RE^d$, its \emph{polar}, denoted by $K^\polar$, is defined by 
\[
	K^\polar
		~ = ~ \bigl\{ y \in \RE^d \ST \inner{x}{y} \leq 1 \text{ for all } x \in K \bigr\}. 
\]
The polar is closed and convex and contains the origin (see, e.g., Barvinok~\cite{Bar02}). If $K$ is a convex body with $0 \in \interior K$, then $K^\polar$ is a convex body with $0 \in \interior K^\polar$ and $(K^\polar)^\polar = K$. Polarity reverses inclusion, that is, if $G \subseteq K$, then $K^\polar \subseteq G^\polar$. 

We next recall definitions and properties of the Funk and Hilbert metrics. Let $K$ be a convex body in $\RE^d$ with $0 \in \interior K$. For $x_1, \dots, x_m \in \RE^d$, let $\conv\{x_1, \dots, x_m\}$ denote their convex hull. For $y \in \RE^d$, define the affine functional 
\[
        f_y(x) ~ := ~ 1 - \inner{y}{x}, \quad\text{for $x \in K$}.
\]
If $y \in K^\polar$, then $f_y$ is nonnegative on $K$ and strictly positive on $\interior K$. Observe that $f_y(x) = f_x(y)$. For $x_1, x_2 \in \interior K$, the \emph{Funk weak metric} is
\[
    \distFunk[K](x_1, x_2)
        ~ := ~ \sup_{y\in K^\polar} \log\frac{f_y(x_1)}{f_y(x_2)},
\]
and the \emph{Hilbert metric} is its symmetrization: 
\[
    \distHilb[K](x_1, x_2)
        ~ := ~ \frac{1}{2} \bigl( \distFunk[K](x_1, x_2) + \distFunk[K](x_2, x_1) \bigr)
        ~ = ~ \frac{1}{2} \sup_{y,z\in K^\polar}
            \log\frac{f_y(x_1)}{f_y(x_2)}
                      \frac{f_z(x_2)}{f_z(x_1)}. 
\]
It is well known that these definitions are equivalent to the standard definitions, which are based on the chord of $K$ through $x_1$ and $x_2$ (see, e.g., \cite[Corollary 2.6]{PaT14}). In particular, the maximizers $y$ and $z$ in $K^\polar$ correspond to supporting hyperplanes of $K$ at the endpoints of this chord. We refer the reader to \cite{PaT14} for further information.

%-----------------------------------------------------------------------
\subsection{Hilbert Packing Numbers} \label{sec:packing-numbers}
%-----------------------------------------------------------------------

Let $G$ be a nonempty compact subset of $\interior K$, and let $\alpha>0$ (see Figure~\ref{fig:packing}(a)). The (ordinary) \emph{Hilbert packing number}, $\Mord_H(G, K; \alpha)$, is the maximum integer $N$ for which there exist points $x_1,\dots,x_N \in G$ satisfying
\[
    \distHilb[K](x_i,x_j) ~ \geq ~ \alpha, \qquad \text{for $1 \leq i < j \leq N$}
\]
(see Figure~\ref{fig:packing}(b)). For a point $x \in \interior K$ and a nonempty set $C \subset \interior K$, define
\[
    \distHilb[K](x,C)
        ~ := ~ \distHilb[K](C,x)
        ~ := ~ \inf_{c \in C} \distHilb[K](x,c)
        ~ =  ~ \inf_{c \in C} \distHilb[K](c,x).
\]
The \emph{Hilbert convexified packing number}, $\Mconv_H(G,K;\alpha)$, is the maximum integer $N$ for which there exist points $x_1, \dots, x_N \in G$ satisfying
\[
    \distHilb[K]\bigl( x_j, \conv\{x_1,\dots,x_{j-1}\} \bigr)
        ~ \geq ~ \alpha, \qquad \text{for $2 \leq j \leq N$}.
\]
(see Figure~\ref{fig:packing}(c)). We call such a sequence a \emph{Hilbert-convexified $\alpha$-separated sequence}.

%-----------------------------------------------------------------------
\begin{figure}[htbp]
  \centerline{\includegraphics[scale=0.40]{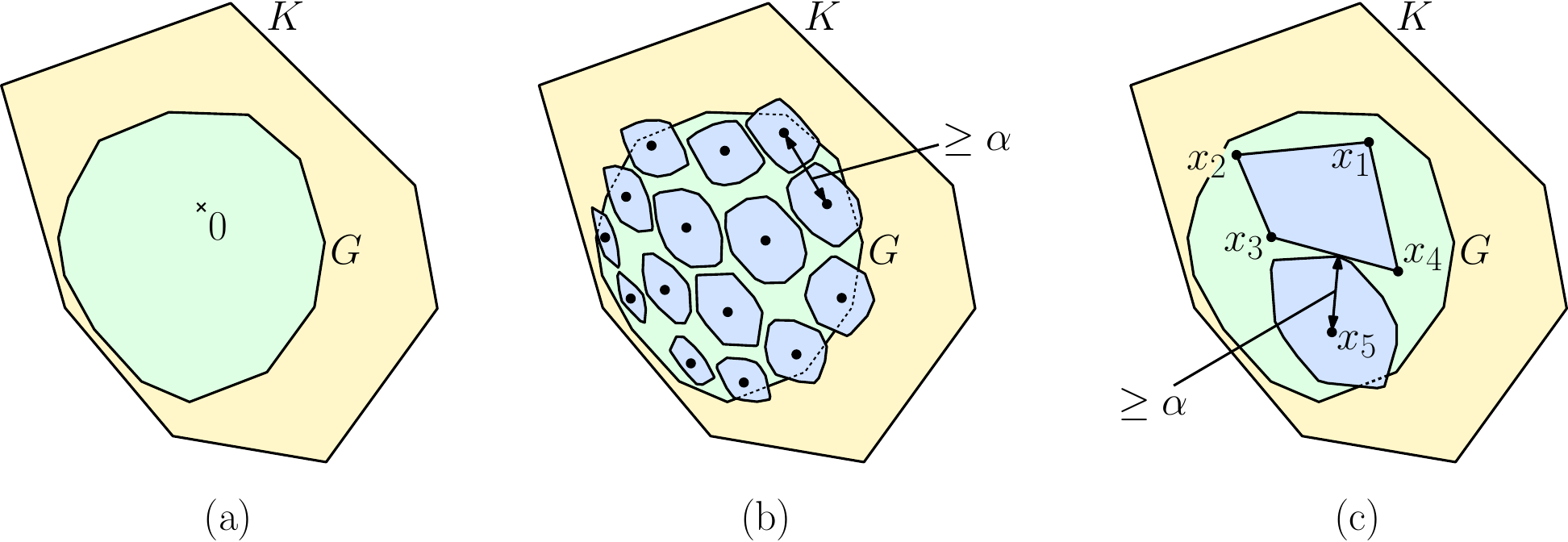}}
  \caption{(a) Convex bodies $G$ and $K$, (b) a Hilbert packing for $\alpha = 0.4$, showing Hilbert balls of radius $0.2$ about each point $x_i$, and (c) a Hilbert convexified $\alpha$-separated sequence for $\alpha = 0.4$, showing $\conv\{x_1, \dots, x_4\}$ and the Hilbert ball of radius $0.4$ about $x_5$.}
  \label{fig:packing}
\end{figure}
%-----------------------------------------------------------------------

Clearly, $\Mconv_H(G, K; \alpha) \leq \Mord_H(G, K; \alpha)$, and both packing numbers are nonincreasing in $\alpha$ and nondecreasing under enlargement of $G$. They are both finite for every $\alpha > 0$ because $G$ is a compact subset of $\interior K$.

%-----------------------------------------------------------------------
\subsection{Polar Potentials} \label{sec:polar-potentials} 
%-----------------------------------------------------------------------

It will be convenient to express the Funk and Hilbert metrics in terms of logarithmic potentials. For $y\in K^\polar$, define the \emph{potential} 
\[
    \phi_y^K(x) ~ := ~ -\log f_y(x), \qquad\text{for $x \in \interior K$}.
\]
For $y,z \in K^\polar$, define the \emph{pair-potential}
\[
    \Phi_{y,z}^K(x)
        ~ := ~ \phi_y^K(x) - \phi_z^K(x)
        ~ = ~ \log \frac{f_z(x)}{f_y(x)}, \qquad\text{for $x \in \interior K$}.
\]
Observe that the pair potential is skew symmetric in the sense that $\Phi_{y,z}^K(x) = -\Phi_{z,y}^K(x)$.

Our assumption that $0 \in \interior G \subset \interior K$ implies that $0 \in \interior K^\polar \subseteq \interior G^\polar$, and combined with the fact that $f_y(x) = f_x(y)$, for $x \in G$ and $y \in K^\polar$, we have
\[
    \phi_z^K(x)
        ~ = ~ \phi_x^{G^\polar}(z).
\]
Consequently, for $x_1,x_2\in G$, the Funk distance has the equivalent representations
\[
    \distFunk[K](x_1,x_2)
        ~ = ~ \sup_{y\in K^\polar}
            \bigl(\phi_y^K(x_2)-\phi_y^K(x_1)\bigr)
        ~ = ~ \sup_{y\in K^\polar}
            \Phi_{x_2,x_1}^{G^\polar}(y).
\]
Symmetrizing the Funk distance gives two useful representations of the Hilbert distance. For $x_1,x_2\in G$,
\begin{align}
    \distHilb[K](x_1,x_2)
        & ~ = ~ \frac{1}{2} \sup_{y,z\in K^\polar} \bigl( \Phi_{y,z}^K(x_2)-\Phi_{y,z}^K(x_1) \bigr), \label{eq:hilbert-pair-potential} \\
    \distHilb[K](x_1,x_2)
        & ~ = ~ \frac{1}{2} \left( \sup_{y\in K^\polar} \Phi_{x_1,x_2}^{G^\polar}(y) - \inf_{y\in K^\polar}
\Phi_{x_1,x_2}^{G^\polar}(y) \right). \label{eq:hilbert-oscillation}
\end{align}

The maps $(y,x)\longmapsto\phi_y^K(x)$ and $(y,z,x)\longmapsto\Phi_{y,z}^K(x)$ are continuous on $K^\polar\times\interior K$ and $K^\polar\times K^\polar\times\interior K$, respectively. In particular, both maps are uniformly continuous when the $x$-variable is restricted to a compact subset of $\interior K$.

%=======================================================================
\section{Two-pole Separation and Polar Oscillation} \label{sec:two-pole-tools}
%=======================================================================

Recall that $K$ is a convex body in $\RE^d$ with $0 \in \interior K$. Since the Hilbert metric is the symmetrization of the Funk metric, its natural separation certificate consists of two polar witnesses. We use normalized nonnegative affine functions and a minimax argument to choose such a pair uniformly on a convex hull. We then bound the oscillation of the resulting pair-potentials by convexified packing in the polar Hilbert geometry.

%-----------------------------------------------------------------------
\subsection{Normalized Affine Functions} \label{sec:normalized-affine-functions}
%-----------------------------------------------------------------------

Let $\Aff(K)$ denote the finite-dimensional vector space of restrictions to $K$ of real-valued affine functions on $\RE^d$. For $x \in \interior K$, define
\[
    \mathcal{A}_x^K
        ~ := ~ \{ a \in \Aff(K) \ST a \geq 0 \text{ on } K,\ a(x) = 1 \}.
\]
For $y \in K^\polar$, define the normalized affine function
\[
    a_y^x(u)
        ~ := ~ \frac{f_y(u)}{f_y(x)}, \qquad\text{for $u \in K$}.
\]
The denominator is positive because $x \in \interior K$. The following lemma shows that all elements of $\mathcal{A}_x^K$ are of this form.

%-----------------------------------------------------------------------
\begin{lemma} \label{lem:normalized-affine-functions}
For every $x \in \interior K$, $\mathcal{A}_x^K = \{ a_y^x \ST y \in K^\polar \}$. In particular, $\mathcal{A}_x^K$ is compact and convex.
\end{lemma}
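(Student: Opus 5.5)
We prove the two inclusions separately and then read off compactness and convexity from the parametrization by $K^\polar$.

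\emph{The inclusion $\supseteq$.} Fix $y \in K^\polar$. Then $f_y$ is affine and nonnegative on $K$, and $f_y(x) > 0$ because $x \in \interior K$. Hence $a_y^x = f_y/f_y(x)$ is an affine function that is nonnegative on $K$ and satisfies $a_y^x(x) = 1$. So $a_y^x \in \mathcal{A}_x^K$.

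\emph{The inclusion $\subseteq$.} Let $a \in \mathcal{A}_x^K$ and write $a(u) = \beta - \inner{w}{u}$ for some $\beta \in \RE$ and $w \in \RE^d$.

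We first show $\beta = a(0) > 0$. Since $0 \in \interior K$, we have $a(0) \geq 0$. Suppose $a(0) = 0$. An affine function that is nonnegative on a neighborhood of $0$ and vanishes at $0$ must have zero linear part, so $a \equiv 0$. This contradicts $a(x) = 1$.

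Now set $y := w/\beta$, so that $a = \beta f_y$. Because $a \geq 0$ on $K$, we have $f_y \geq 0$ on $K$, that is, $\inner{y}{u} \leq 1$ for all $u \in K$. Thus $y \in K^\polar$. Finally, $1 = a(x) = \beta f_y(x)$ gives $\beta = 1/f_y(x)$, and therefore $a = a_y^x$.

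\emph{Compactness and convexity.} Convexity is immediate from the original definition: $\mathcal{A}_x^K$ is the intersection of the affine hyperplane $\{a(x) = 1\}$ with the convex cone $\{a \geq 0 \text{ on } K\}$ in $\Aff(K)$.

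For compactness, consider the map $y \mapsto a_y^x$ from $K^\polar$ to $\Aff(K)$, where $\Aff(K)$ carries its coefficient coordinates. This map is continuous on $K^\polar$, since $f_y(x) > 0$ there. It is surjective onto $\mathcal{A}_x^K$ by the two inclusions above. Since $K^\polar$ is compact, its continuous image $\mathcal{A}_x^K$ is compact.

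The only step requiring care is the positivity of $a(0)$, which is where the hypothesis $0 \in \interior K$ is essential. The identification of $\Aff(K)$ with coefficient vectors is legitimate because $K$ has nonempty interior, so an affine function is determined by its restriction to $K$.
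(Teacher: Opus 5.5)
Your proof is correct and follows essentially the same route as the paper: both inclusions, the positivity of $a(0)$ via $0 \in \interior K$, the choice $y = w/\beta$ (the paper's $y = -q/a(0)$), and compactness as the continuous image of $K^\polar$ all match. Your remark that an affine function is determined by its restriction to $K$ because $K$ has nonempty interior is a small clarification the paper leaves implicit.
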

%-----------------------------------------------------------------------

%-----------------------------------------------------------------------
\begin{proof}
If $y \in K^\polar$, then $f_y \geq 0$ on $K$ and $f_y(x) > 0$. Thus $a_y^x \in \mathcal{A}_x^K$. Conversely, given $a \in \mathcal{A}_x^K$, we can express it equivalently as
\[
    a(u)
        ~ = ~ a(0) + \inner{q}{u}.
\]
We first observe that $a(0) > 0$. To see this, the fact that $0 \in \interior K$ implies that $a$ is nonnegative on a neighborhood of $0$. If, to the contrary, $a(0)$ were $0$, then $0$ is a local minimum of the affine function $a$, and hence $q = 0$. This would imply $a \equiv 0$, contradicting $a(x) = 1$.

Set $y := -q/a(0)$. We will show that $y \in K^\polar$ and $a_y^x = a$. For all $u \in K$, we have
\[
    f_y(u)
        ~ = ~ 1 + \frac{\inner{q}{u}}{a(0)}
        ~ = ~ \frac{a(u)}{a(0)}.
\]
Since $a \geq 0$ on $K$, it follows that $f_y \geq 0$ on $K$, and hence $y \in K^\polar$. Moreover, $f_y(x) = a(x)/a(0) = 1/a(0)$, so
\[
    a_y^x(u)
        ~ = ~ \frac{f_y(u)}{f_y(x)}
        ~ = ~ a(u),
\]
as desired. 

Observe that the map $y\longmapsto a_y^x$ from $K^\polar$ to $\Aff(K)$ is continuous. Since $K^\polar$ is compact and its image is $\mathcal{A}_x^K$, the latter set is also compact. Convexity follows directly from its definition. 
\end{proof}
%-----------------------------------------------------------------------

Notice also that every $a \in \mathcal{A}_x^K$ is strictly positive on $\interior K$. Indeed, a nonnegative affine function that vanishes at an interior point must vanish identically, contradicting $a(x) = 1$.

%-----------------------------------------------------------------------
\subsection{Two-pole Separation} \label{sec:two-pole-separation}
%-----------------------------------------------------------------------

Pointwise Hilbert separation provides a pair of affine witnesses for each point of the preceding convex hull, but the pair may depend on that point. The minimax argument below produces a single pair that works uniformly on the entire hull. 

%-----------------------------------------------------------------------
\begin{lemma} \label{lem:finite-two-pole-separation}
Let $x_1, \dots, x_j \in \interior K$, where $j \geq 2$, and let $C = \conv\{x_1,\dots,x_{j-1}\}$. If $\distHilb[K](C,x_j) \geq \alpha$, then there exist $y_j,z_j \in K^\polar$ such that, with
\[
    \Phi_j ~ := ~ \Phi_{y_j,z_j}^K,
\]
one has $\Phi_j(x_j) - \Phi_j(c) \geq 2 \alpha$, for all $c \in C$. In particular,
\[
    \Phi_j(x_j) - \Phi_j(x_i)
        ~ \geq ~ 2 \alpha \qquad \text{for $1 \leq i < j$}.
\]
\end{lemma}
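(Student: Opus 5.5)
The plan is to rewrite the desired inequality as a condition on normalized affine functions at $x_j$, make it bilinear, and apply a minimax theorem over the compact convex set $\mathcal{A}_{x_j}^K\times\mathcal{A}_{x_j}^K$. The preceding lemma will then convert the resulting pair of affine functions back into polar witnesses.

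\emph{Step 1: rewriting the target.} For $y,z\in K^\polar$ and $c\in\interior K$, write $a_y:=a_y^{x_j}$ and $a_z:=a_z^{x_j}$. Unwinding the definitions gives
\[
    \Phi_{y,z}^K(x_j)-\Phi_{y,z}^K(c)
        ~ = ~ \log\frac{f_y(c)}{f_y(x_j)}-\log\frac{f_z(c)}{f_z(x_j)}
        ~ = ~ \log a_y(c)-\log a_z(c).
\]
Both values are positive, because $c\in C\subset\interior K$ (each $x_i$ is interior, hence so is their hull). Therefore the desired inequality $\Phi_j(x_j)-\Phi_j(c)\ge 2\alpha$ is equivalent to $a_y(c)-e^{2\alpha}a_z(c)\ge 0$.

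\emph{Step 2: the pointwise statement.} Fix $c\in C$. Since $\distHilb[K](C,x_j)\ge\alpha$, we have $\distHilb[K](c,x_j)\ge\alpha$. Representation~\eqref{eq:hilbert-pair-potential}, with $x_1=c$ and $x_2=x_j$, gives $\sup_{y,z}\bigl(\Phi_{y,z}^K(x_j)-\Phi_{y,z}^K(c)\bigr)\ge 2\alpha$. The supremum is attained, because $K^\polar\times K^\polar$ is compact and the map is continuous. Hence for each $c$ there exist $y,z$ with $a_y(c)-e^{2\alpha}a_z(c)\ge 0$.

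\emph{Step 3: the minimax argument (the main step).} This is where the dependence of the witnesses on $c$ must be removed. Define
\[
    g\bigl((a,b),c\bigr) ~ := ~ a(c)-e^{2\alpha}b(c), \qquad (a,b)\in\mathcal{A}_{x_j}^K\times\mathcal{A}_{x_j}^K,\ c\in C.
\]
The function $g$ is linear in $(a,b)$ for fixed $c$, and affine, hence continuous and convex, in $c$ for fixed $(a,b)$. By the preceding lemma, $\mathcal{A}_{x_j}^K$ is compact and convex, and $C$ is a compact convex polytope. Sion's (or von Neumann's) minimax theorem therefore applies:
\[
    \max_{(a,b)}\min_{c\in C} g ~ = ~ \min_{c\in C}\max_{(a,b)} g ~ \ge ~ 0,
\]
where the final inequality is Step 2. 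The naive approach, applying minimax directly to $\log a(c)-\log b(c)$, fails because that expression is not concave in $b$. Passing to the bilinear form $g$ is what makes the theorem applicable.

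\emph{Step 4: recovering the polar witnesses.} Steps 1 and 3 give a single pair $(a,b)$ with $a(c)\ge e^{2\alpha}b(c)>0$ for all $c\in C$. By the preceding lemma, $a=a_{y_j}^{x_j}$ and $b=a_{z_j}^{x_j}$ for some $y_j,z_j\in K^\polar$. By Step 1, $\Phi_j(x_j)-\Phi_j(c)\ge 2\alpha$ for all $c\in C$. The special case $c=x_i$ for $i<j$ gives the final claim of the lemma.
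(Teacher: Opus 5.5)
Your proposal is correct and follows the paper's proof essentially step for step: you rewrite $\Phi_{y,z}^K(x_j)-\Phi_{y,z}^K(c)$ as $\log\bigl(a_y^{x_j}(c)/a_z^{x_j}(c)\bigr)$, pass to the bilinear form $p(c)-e^{2\alpha}q(c)$, apply Sion's minimax theorem over $\mathcal{A}_{x_j}^K\times\mathcal{A}_{x_j}^K$ and $C$, and recover $y_j,z_j$ via Lemma~\ref{lem:normalized-affine-functions}. The only cosmetic difference is that you invoke Eq.~\eqref{eq:hilbert-pair-potential}, which the paper states for points of $G$, whereas the paper works directly from the definition of the Hilbert metric; since that identity holds verbatim for any two points of $\interior K$, nothing is lost.
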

%-----------------------------------------------------------------------

%-----------------------------------------------------------------------
\begin{proof}
To simplify notation, let $x := x_j$. For every $c \in C$,
\[
    2 \cdot \distHilb[K](c,x)
        ~ = ~ \max_{y,z \in K^\polar} \log\left( \frac{f_y(c)}{f_y(x)} \frac{f_z(x)}{f_z(c)} \right) 
        ~ = ~ \max_{y,z \in K^\polar} \log \frac{a_y^{x}(c)}{a_z^{x}(c)}.
\]
As $y$ and $z$ range independently over $K^\polar$, Lemma~\ref{lem:normalized-affine-functions} yields
\[
    2  \cdot \distHilb[K](c,x)
        ~ = ~ \max_{p,q \in \mathcal{A}_x^K} \log\frac{p(c)}{q(c)}.
\]

Next, for $(p,q) \in \mathcal{A}_x^K \times \mathcal{A}_x^K$ and $c\in C$, define $L_{\alpha}(p, q; c) := p(c) - e^{2\alpha} q(c)$. Since $\distHilb[K](c,x) \geq \alpha$, the preceding identity implies that $\max_{p,q \in \mathcal{A}_x^K} L_{\alpha}(p, q; c)$ is nonnegative for every $c \in C$. The sets $\mathcal{A}_x^K \times \mathcal{A}_x^K$ and $C$ are compact and convex, and $L_{\alpha}$ is continuous and affine in each variable. Therefore, by Sion's Minimax Theorem \cite{Sio58}, we have
\[
    \max_{p,q \in \mathcal{A}_x^K} \min_{c \in C} L_{\alpha}(p, q; c)
        ~ = ~ \min_{c \in C} \max_{p,q \in \mathcal{A}_x^K} L_{\alpha}(p, q; c)
        ~ \geq ~ 0.
\]
Consequently, there exist $p,q\in\mathcal{A}_x^K$ such that
\[
    p(c) ~ \geq ~ e^{2\alpha} q(c), \qquad\text{for all $c \in C$}.
\]
By Lemma~\ref{lem:normalized-affine-functions}, there exist $y_j, z_j \in K^\polar$ such that $p = a_{y_j}^x$ and $q = a_{z_j}^x$. Taking logarithms implies that for all $c\in C$,
\begin{align*}
    2 \alpha
        & ~ \leq ~ \log\frac{a_{y_j}^x(c)}{a_{z_j}^x(c)} 
          ~ = ~ \log \left( \frac{f_{y_j}(c)}{f_{y_j}(x)} \frac{f_{z_j}(x)}{f_{z_j}(c)} \right)  \\
        & ~ = ~ \Phi_{y_j,z_j}^K(x) - \Phi_{y_j,z_j}^K(c).
\end{align*}
Recalling that $x = x_j$ proves the first assertion. Taking $c = x_i$ yields the second.
\end{proof}
%-----------------------------------------------------------------------

%-----------------------------------------------------------------------
\subsection{Polar Oscillation} \label{sec:polar-oscillation}
%-----------------------------------------------------------------------

The scale-selection argument in the next section depends on the range of the pair-potentials on $G$. Define the \emph{total oscillation} by 
\[
    \Omega_2(G, K)
        ~ := ~ \sup_{\substack{x \in G \\ y,z \in K^\polar}} \Phi_{y,z}^K(x) - \inf_{\substack{x \in G \\ y,z \in K^\polar}} \Phi_{y,z}^K(x).
\]
The extrema in this definition are attained by continuity and compactness. The following lemma identifies a chord of $K^\polar$ whose Hilbert length is at least $\Omega_2(G, K)/4$.

%-----------------------------------------------------------------------
\begin{lemma} \label{lem:omega-realization}
There exist $y,z \in K^\polar$ such that
\[
    \Omega_2(G, K)
        ~ \leq ~ 4 \cdot \distHilb[G^\polar](y,z).
\]
\end{lemma}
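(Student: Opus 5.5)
The approach is to rewrite the pair-potential on the primal side as a difference of potentials on the polar side, using the identity $f_y(x)=f_x(y)$ from Section~\ref{sec:polar-potentials}. This turns the supremum over $x\in G$ into a Funk distance in the geometry of $G^\polar$. A Funk distance is at most twice the Hilbert distance, which gives the factor $4$.

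First, reduce $\Omega_2(G,K)$ to a one-sided supremum. By skew symmetry, $\Phi_{y,z}^K(x)=-\Phi_{z,y}^K(x)$, so the infimum in the definition of $\Omega_2$ equals minus the supremum. Hence
\[
    \Omega_2(G,K) ~ = ~ 2\max_{x\in G,\; y,z\in K^\polar}\Phi_{y,z}^K(x).
\]
The maximum is attained by compactness and continuity. Fix a maximizing triple $(x^*,y,z)$; the points $y,z$ will be the witnesses in the lemma.

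Next, pass to the polar side. For $x\in G$ and $y,z\in K^\polar$ we have
\[
    \Phi_{y,z}^K(x) ~ = ~ \log\frac{f_z(x)}{f_y(x)} ~ = ~ \log\frac{f_x(z)}{f_x(y)}.
\]
Since $G=(G^\polar)^\polar$ and $y,z\in K^\polar\subset\interior G^\polar$, taking the supremum over $x\in G$ gives
\[
    \max_{x\in G}\Phi_{y,z}^K(x) ~ = ~ \distFunk[G^\polar](z,y).
\]
This is exactly the Funk representation of Section~\ref{sec:preliminaries}, applied to the body $G^\polar$ in place of $K$. Therefore $\Omega_2(G,K)=2\,\distFunk[G^\polar](z,y)$.

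Finally, compare Funk and Hilbert distances. Each Funk distance between interior points is nonnegative: taking the witness $x=0\in G$ gives the value $\log(1/1)=0$. Hence
\[
    \distFunk[G^\polar](z,y) ~ \leq ~ \distFunk[G^\polar](z,y)+\distFunk[G^\polar](y,z) ~ = ~ 2\,\distHilb[G^\polar](y,z),
\]
and so $\Omega_2(G,K)\le 4\,\distHilb[G^\polar](y,z)$.

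I do not expect a genuine obstacle. The only care needed is bookkeeping: getting the order of arguments in the Funk distance consistent with the paper's convention, and checking that the polar Funk formula holds with $G^\polar$ in the role of $K$. That formula requires $0\in\interior G^\polar$ and $y,z\in\interior G^\polar$, both of which follow from the standing assumptions.
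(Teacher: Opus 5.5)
Your proof is correct and is essentially the paper's argument in Funk language. The paper also picks a maximizing triple using skew symmetry, and then bounds $2\,\distHilb[G^\polar](y,z) = \sup_{u\in G}\Phi^K_{y,z}(u) - \inf_{u\in G}\Phi^K_{y,z}(u) \geq \Phi^K_{y,z}(x) - \Phi^K_{y,z}(0)$; there, the fact that $-\inf_{u\in G}\Phi^K_{y,z}(u) \geq 0$ is exactly your nonnegativity of $\distFunk[G^\polar](y,z)$ via the witness $0\in G$ (your version additionally records the identity $\Omega_2(G,K) = 2\,\distFunk[G^\polar](z,y)$).
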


\begin{proof}
The skew symmetry of $\Phi^K_{y,z}$ implies that the set of values attained by the pair-potentials is symmetric about zero. Thus, by compactness, there exist $x \in G$ and $y,z \in K^\polar$ such that
\[
    \Phi^K_{y,z}(x) 
        ~ = ~ \frac{\Omega_2(G,K)}{2}.
\]
Applying Eq.~\eqref{eq:hilbert-oscillation} to $y,z$ gives
\begin{align*}
    2 \cdot \distHilb[G^\polar](y,z)
        & ~ = ~ \sup_{u \in G} \Phi^K_{y,z}(u) - \inf_{u \in G} \Phi^K_{y,z}(u) \\
        & ~ \geq ~ \Phi^K_{y,z}(x) - \Phi^K_{y,z}(0)
          ~ = ~ \Phi^K_{y,z}(x)
          ~ = ~ \frac{\Omega_2(G,K)}{2},
\end{align*}
which implies the desired bound.
\end{proof}
%-----------------------------------------------------------------------

The following lemma bounds this oscillation by the convexified packing number in the polar Hilbert geometry.

%-----------------------------------------------------------------------
\begin{lemma}[Polar oscillation estimate] \label{lem:omega-convexified}
For every $\alpha > 0$,
\[
    1 + \frac{\Omega_2(G,K)}{\alpha}
        ~ \leq ~ 5 \, \Mconv_H(K^\polar, G^\polar; \alpha).
\]
\end{lemma}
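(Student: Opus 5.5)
The plan is to combine Lemma~\ref{lem:omega-realization} with an explicit convexified packing built along a single segment of $K^\polar$. By Lemma~\ref{lem:omega-realization}, there are $y,z \in K^\polar$ with $D := \distHilb[G^\polar](y,z) \geq \Omega_2(G,K)/4$. Since $K^\polar$ is convex, the whole segment $[y,z]$ lies in $K^\polar$. The segment is also a compact subset of $\interior G^\polar$, so all points on it are legitimate points for the packing.

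Along $[y,z]$, I will use the standard fact that Hilbert distance is additive for collinear points. That is, if $u$ lies on the segment between $v$ and $w$, then
\[
    \distHilb[G^\polar](v,w) ~ = ~ \distHilb[G^\polar](v,u) + \distHilb[G^\polar](u,w).
\]
This follows from the chord/cross-ratio description of the metric recalled in Section~\ref{sec:preliminaries}, since all three points share the same chord of $G^\polar$. In addition, $t \mapsto \distHilb[G^\polar](y,(1-t)y+tz)$ is continuous and nondecreasing from $0$ to $D$.

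Let $m := \floor{D/\alpha}$. By continuity and monotonicity, I choose points $w_0 = y, w_1, \dots, w_m$ on $[y,z]$, ordered from $y$ toward $z$, with $\distHilb[G^\polar](y,w_k) = k\alpha$. I then claim this sequence is Hilbert-convexified $\alpha$-separated in $(K^\polar, G^\polar)$. The hull $\conv\{w_0,\dots,w_{k-1}\}$ is exactly the segment $[w_0,w_{k-1}]$. For any $c$ in this segment, $w_{k-1}$ lies between $c$ and $w_k$. Additivity then gives
\[
    \distHilb[G^\polar](c,w_k) ~ \geq ~ \distHilb[G^\polar](w_{k-1},w_k) ~ = ~ \alpha .
\]
Hence $\Mconv_H(K^\polar,G^\polar;\alpha) \geq m+1 > D/\alpha \geq \Omega_2(G,K)/(4\alpha)$.

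To conclude, I add the trivial bound $1 \leq \Mconv_H(K^\polar,G^\polar;\alpha)$ to $\Omega_2(G,K)/\alpha \leq 4\,\Mconv_H(K^\polar,G^\polar;\alpha)$, which gives the stated inequality with constant $5$.

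The only nonroutine point is justifying collinear additivity and the intermediate-value choice of the $w_k$ from the sup-based definition used in this paper. I would do this by invoking the equivalence with the chord definition cited from \cite{PaT14}, where additivity of the log-cross-ratio along a line is immediate. The degenerate case $D < \alpha$ is covered automatically, since then $m = 0$ and the bound reduces to $\Mconv_H \geq 1$.
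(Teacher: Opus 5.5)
Your proposal is correct and follows the paper's proof essentially step for step: you realize a polar chord of Hilbert length at least $\Omega_2(G,K)/4$ via Lemma~\ref{lem:omega-realization}, place $\floor{D/\alpha}+1$ points at spacing $\alpha$ along it using collinear additivity, and add the trivial bound $\Mconv_H(K^\polar,G^\polar;\alpha)\geq 1$. Your remark that $w_{k-1}$ lies between any $c\in[w_0,w_{k-1}]$ and $w_k$ spells out the detail behind the paper's assertion that the distance from the preceding hull to the next point is exactly $\alpha$.
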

%-----------------------------------------------------------------------

%-----------------------------------------------------------------------
\begin{proof}
Lemma~\ref{lem:omega-realization} implies that there exist $y,z \in K^\polar$ such that
\[
    \distHilb[G^\polar](y,z)
        ~ \geq ~ \frac{\Omega_2(G,K)}{4}.
\]
Let $D$ denote this Hilbert distance. It is well known that the restriction of the Hilbert metric to an open chord is additive along ordered points \cite{PaT14}. That is, if $p,q,r$ occur in this order on a chord of $G^\polar$, then
\[
    \distHilb[G^\polar](p,r)
        ~ = ~ \distHilb[G^\polar](p,q) + \distHilb[G^\polar](q,r).
\]
Set $m := \floor{D/\alpha} + 1$. Using additivity along the chord $[y,z]$, select a sequence of points
\[
        v_1, \dots, v_m \in [y,z]
\]
in their natural order such that $v_1 = y$ and $\distHilb[G^\polar](v_i, v_{i+1}) = \alpha$, for $1 \leq i < m$. (When $m = 1$, only the point $v_1 = y$ is needed.) For $2 \leq j \leq m$, the points are collinear and ordered, so
\[
    \conv\{v_1, \dots, v_{j-1}\} ~ = ~ [v_1, v_{j-1}]
    \qquad\text{and}\qquad
    \distHilb[G^\polar] \bigl( \conv\{v_1,\dots,v_{j-1}\}, v_j \bigr) ~ = ~ \alpha.
\]
 Thus $v_1, \dots ,v_m$ forms a Hilbert-convexified $\alpha$-separated sequence in $K^\polar$. Therefore, considering just this chord, we have
\[
    \Mconv_H(K^\polar, G^\polar; \alpha)
        ~ \geq ~ m
        ~ =    ~ \floor{\frac{D}{\alpha}} + 1
        ~ \geq ~ \frac{D}{\alpha}
        ~ \geq ~ \frac{\Omega_2(G,K)}{4\alpha}.
\]
Since $K^\polar$ is not empty, the same packing number is at least $1$. Combining the two gives
\[
    1 + \frac{\Omega_2(G,K)}{\alpha}
        ~ \leq ~ \Mconv_H(K^\polar, G^\polar; \alpha) + 4 \, \Mconv_H(K^\polar,G^\polar; \alpha)
        ~ =    ~ 5 \, \Mconv_H(K^\polar, G^\polar; \alpha),
\]
as desired. 
\end{proof}
%-----------------------------------------------------------------------

%=======================================================================
\section{Diagonal Convexification and the Mixed Primal--polar Bound} \label{sec:diagonal-convexification}
%=======================================================================

We now combine the finite-hull two-pole separation theorem with a scale decomposition of the resulting polar witnesses. Within each scale class, the witness pairs form a diagonally convexified packing in $K^\polar \times K^\polar$. A mixed product estimate for this packing then yields the main result. 

%-----------------------------------------------------------------------
\subsection{Diagonal Convexified Packing} \label{sec:diagonal-convexified-packing}
%-----------------------------------------------------------------------

Each primal point produces two polar witnesses. When earlier witness pairs are convexified, the two coordinates must be averaged with the same coefficients, because they arise from a single convex combination of the preceding primal points. This leads to the following notion. 

%-----------------------------------------------------------------------
\begin{definition}[Diagonal convexified packing] \label{def:diagonal-packing}
For $\eta > 0$, let
\[
    \Mconv_{\mathrm{diag},H} (K^\polar, G^\polar; \eta)
\]
be the supremum of all $N \in \NN$ for which there exists a sequence $\ang{(y_1, z_1), \dots, (y_N,z_N)}$ of points from $K^\polar \times K^\polar$ such that, for every $2\leq j\leq N$ and every choice of coefficients $\lambda_i \geq 0$ and $\sum_{i<j} \lambda_i = 1$, one has
\[
    \distHilb[G^\polar] \!\left( \left( \sum\nolimits_{i<j} \lambda_i y_i \right), y_j \right) + \distHilb[G^\polar] \!\left( \left( \sum\nolimits_{i<j} \lambda_i z_i \right), z_j \right)
        ~ \geq ~ \eta.
\]
If such integers $N$ are unbounded, we set $\Mconv_{\mathrm{diag},H} (K^\polar,G^\polar; \eta) = +\infty$.
\end{definition}
%-----------------------------------------------------------------------

Observe that this can be viewed as a type of convexified packing over $K^\polar \times K^\polar$. The name derives from the fact that because the same $\lambda_i$ values are used in both convex combinations, points are convexified in pairs:
\[
    \conv \{ (y_i,z_i) \ST i<j \}
        ~ \subset ~ K^\polar\times K^\polar,
\]
rather than independently. This synchronization of the coefficients is the essential feature of the definition.

The following elementary estimate shows that if two witness pairs give substantially different pair-potential values at the same primal point, then the sum of the Hilbert distances between their corresponding polar coordinates is large.

%-----------------------------------------------------------------------
\begin{lemma} \label{lem:synchronized-ratio-estimate}
For $y, z, y',z'\in K^\polar$ and $x\in G$,
\[
    \distHilb[G^\polar](y,y') + \distHilb[G^\polar](z,z')
        ~ \geq ~ \frac{1}{2} \abs{\Phi_{y,z}^K(x) - \Phi_{y',z'}^K(x)}.
\]
\end{lemma}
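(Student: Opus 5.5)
We split the difference of pair-potentials at $x$ into one difference per polar coordinate and bound each by a Hilbert distance in $G^\polar$. Since $\Phi_{y,z}^K(x) = \phi_y^K(x) - \phi_z^K(x)$, we have
\[
    \Phi_{y,z}^K(x) - \Phi_{y',z'}^K(x)
        ~ = ~ \bigl(\phi_y^K(x) - \phi_{y'}^K(x)\bigr) - \bigl(\phi_z^K(x) - \phi_{z'}^K(x)\bigr).
\]
By the triangle inequality it therefore suffices to show
\[
    \abs{\phi_y^K(x) - \phi_{y'}^K(x)} ~ \leq ~ 2 \, \distHilb[G^\polar](y,y'),
\]
and the same bound with $y,y'$ replaced by $z,z'$. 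Adding the two bounds and dividing by $2$ then gives the claim.

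To bound a single coordinate, we use the identity $\phi_y^K(x) = \phi_x^{G^\polar}(y)$ from Section~\ref{sec:polar-potentials}, which holds for $x \in G$ and $y \in K^\polar$. This turns the $y$-dependence into a potential on the polar side, so
\[
    \phi_y^K(x) - \phi_{y'}^K(x) ~ = ~ \phi_x^{G^\polar}(y) - \phi_x^{G^\polar}(y') ~ = ~ \log\frac{f_x(y')}{f_x(y)}.
\]
Because $x \in G = (G^\polar)^\polar$, the vector $x$ is an admissible witness in the definition of the Funk metric of $G^\polar$. The right-hand side is therefore at most $\distFunk[G^\polar](y,y')$, and by the same argument its negative is at most $\distFunk[G^\polar](y',y)$.

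It remains to compare each one-sided Funk distance with the Hilbert distance. Both Funk distances are nonnegative, since the witness $0$ gives the value $\log 1 = 0$. Hence
\[
    \max\bigl(\distFunk[G^\polar](y,y'), \distFunk[G^\polar](y',y)\bigr) ~ \leq ~ \distFunk[G^\polar](y,y') + \distFunk[G^\polar](y',y) ~ = ~ 2\,\distHilb[G^\polar](y,y').
\]
The same argument applies to $z,z'$. There is no real obstacle in this argument. The only points to check are that $y,y',z,z'$ lie in $\interior G^\polar$, which holds because $K^\polar \subset \interior G^\polar$, and that the factor $2$ from the symmetrization gives exactly the stated constant $\tfrac12$.
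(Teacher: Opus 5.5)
Your proof is correct and is essentially the paper's argument. The paper also splits the difference coordinatewise via $|a|+|b|\geq|a-b|$, and bounds each coordinate by evaluating the Hilbert supremum for $G^\polar$ at the witness pair $(x,0)$; this is exactly your Funk witness $x$ combined with the zero contribution from the witness $0$.

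One harmless slip: under the paper's convention, the witness $x$ bounds $\log(f_x(y')/f_x(y))$ by $\distFunk[G^\polar](y',y)$, not by $\distFunk[G^\polar](y,y')$. Since you then bound by the sum of both Funk distances, the conclusion is unaffected.
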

%-----------------------------------------------------------------------

%-----------------------------------------------------------------------
\begin{proof}
We first claim that, for every $u,u'\in K^\polar$,
\[
    2\distHilb[G^\polar](u,u')
        ~ \geq ~
        \abs{\phi_u^K(x)-\phi_{u'}^K(x)}.
\]
Indeed, applying Eq.~\eqref{eq:hilbert-pair-potential} with $G^\polar$ as the ambient body, and using the pair $(x,0) \in G \times G$, gives
\[
    2\distHilb[G^\polar](u,u')
        ~ \geq ~ \Phi_{x,0}^{G^\polar}(u') - \Phi_{x,0}^{G^\polar}(u)
        ~ = ~ \phi_x^{G^\polar}(u') - \phi_x^{G^\polar}(u)
        ~ = ~ \phi_{u'}^K(x) - \phi_u^K(x).
\]
Interchanging $u$ and $u'$ proves the claim.

Applying the claim to $(u,u') = (y,y')$ and $(u,u') = (z,z')$ gives
\begin{align*}
    2\bigl( \distHilb[G^\polar](y,y') + \distHilb[G^\polar](z,z') \bigr)
        & ~ \geq ~ \abs{\phi_y^K(x) - \phi_{y'}^K(x)} + \abs{\phi_z^K(x) - \phi_{z'}^K(x)} \\
        & ~ \geq ~ \abs{\bigl(\phi_y^K(x)-\phi_z^K(x)\bigr) - \bigl(\phi_{y'}^K(x)-\phi_{z'}^K(x)\bigr)} \\
        & ~ =    ~ \abs{\Phi_{y,z}^K(x) - \Phi_{y',z'}^K(x)}.
\end{align*}
Dividing by $2$ proves the assertion.
\end{proof}
%-----------------------------------------------------------------------

%-----------------------------------------------------------------------
\subsection{Scale Decomposition and Diagonal Reduction} \label{sec:scale-selected-witness-pairs}
%-----------------------------------------------------------------------

The witness pair associated with a primal point has a relative normalization, measured by the value of its pair-potential at that point. The polar oscillation estimate allows us to partition the witnesses into a controlled number of classes in which these normalizations are comparable. 

%-----------------------------------------------------------------------
\begin{lemma} \label{lem:scale-selected-diagonal-reduction}
For every $\alpha > 0$,
\[
    \Mconv_H(G, K; \alpha)
        ~ \leq ~ 1 + 10 \, \Mconv_H(K^\polar, G^\polar; \alpha) \, \Mconv_{\mathrm{diag},H} \!\left( K^\polar, G^\polar; \frac{3\alpha}{4} \right).
\]
\end{lemma}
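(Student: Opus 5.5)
Let $\ang{x_1,\dots,x_N}$ be a Hilbert-convexified $\alpha$-separated sequence in $G$ with $N = \Mconv_H(G,K;\alpha)$. For each $j\ge 2$ we apply Lemma~\ref{lem:finite-two-pole-separation} to obtain a witness pair $(y_j,z_j)\in K^\polar\times K^\polar$ with pair-potential $\Phi_j$ satisfying $\Phi_j(x_j)-\Phi_j(c)\ge 2\alpha$ for all $c\in\conv\{x_1,\dots,x_{j-1}\}$. The plan has three steps. First, we sort the indices $j\ge 2$ by the normalization value $t_j:=\Phi_j(x_j)$. Second, we show that within a class of nearly equal $t_j$ the pairs form a diagonally convexified $3\alpha/4$-packing. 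Third, we count the classes using Lemma~\ref{lem:omega-convexified}. The leading ``$1+$'' accounts for the index $j=1$, which has no witness.

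For the scale decomposition, every $t_j$ lies in the interval $[\inf\Phi,\sup\Phi]$, whose length is $\Omega_2(G,K)$. We partition this interval into consecutive subintervals of length $\alpha/2$. The number of classes is then at most $1+2\Omega_2(G,K)/\alpha\le 2(1+\Omega_2/\alpha)\le 10\,\Mconv_H(K^\polar,G^\polar;\alpha)$ by Lemma~\ref{lem:omega-convexified}. This is exactly where the factor $10$ comes from.

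For the diagonal step, fix one class and list its indices in increasing order $j_1<j_2<\cdots$. Take $j=j_k$ and coefficients $\lambda_i\ge 0$ summing to $1$ over the earlier indices $i=j_1,\dots,j_{k-1}$ of the class. Set $\bar y=\sum\lambda_i y_i$, $\bar z=\sum\lambda_i z_i$, and $c=\sum\lambda_i x_i$; note that $c$ lies in the hull of the predecessors of $x_j$. The key point, and the main obstacle, is to relate the averaged witnesses to the averaged points. We intend to exploit bilinearity: $f_{\bar y}(x_j)=\sum\lambda_i f_{y_i}(x_j)$, so ratios of averaged affine values are controlled by the extreme ratios among the terms.

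The concrete route is as follows. Lemma~\ref{lem:synchronized-ratio-estimate}, applied at the point $x_j$, gives
\[
\distHilb[G^\polar](\bar y,y_j)+\distHilb[G^\polar](\bar z,z_j)\ \ge\ \tfrac12\bigl(\Phi_j(x_j)-\Phi_{\bar y,\bar z}(x_j)\bigr).
\]
It therefore suffices to show $\Phi_{\bar y,\bar z}(x_j)\le t_j-3\alpha/2$. For each $i$ in the class we have $\Phi_i(x_i)=t_i\ge t_j-\alpha/2$, and separation gives $\Phi_j(x_i)\le t_j-2\alpha$. The difficulty is that we need information about $\Phi_i(x_j)$, not about $\Phi_j(x_i)$. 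I would try to use the separation of $x_j$ together with an oscillation or duality bound on the reversed quantity $\Phi_i$ evaluated along the segment. Alternatively, one could apply Eq.~\eqref{eq:hilbert-oscillation} in $G^\polar$ with the primal pair $(x_j,c)$ and bound $\Phi_{\bar y,\bar z}(x_j)-\Phi_{\bar y,\bar z}(c)$ from below by a mediant-type inequality. Comparing $\Phi_{\bar y,\bar z}(c)$ with the values $t_i$ would then use the common scale. I expect the constant $3\alpha/4$ to arise as $\tfrac12(2\alpha-\alpha/2)$, which means the needed inequality is that the averaged pair lowers the potential gap by at most the class width $\alpha/2$.

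Granting this step, each class is a diagonally convexified $3\alpha/4$-sequence. Its length is therefore at most $\Mconv_{\mathrm{diag},H}(K^\polar,G^\polar;3\alpha/4)$. Summing this bound over at most $10\,\Mconv_H(K^\polar,G^\polar;\alpha)$ classes and adding $1$ for the index $j=1$ gives the lemma.
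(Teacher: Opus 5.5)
Your scale decomposition and class count match the paper, but the diagonal step has a genuine gap, and it is the one you flag yourself. You list a class in increasing order $j_1<j_2<\cdots$ and treat earlier indices as predecessors. You then need $\Phi_{\bar y,\bar z}(x_j)\le t_j-3\alpha/2$, which amounts to upper bounds on $\Phi_i(x_j)$ for earlier $i<j$. Nothing supplies these: the witness $(y_i,z_i)$ from Lemma~\ref{lem:finite-two-pole-separation} is chosen using only $x_1,\dots,x_i$ and says nothing about later points. Your mediant observation is correct, but the extreme ratios it controls are exactly these unknown values $\Phi_i(x_j)$.

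The repairs you sketch do not close the gap. Evaluating at $c=\sum\lambda_i x_i$ gives $f_{\bar y}(c)=\sum_{i,k}\lambda_i\lambda_k f_{y_i}(x_k)$, whose cross terms with $i<k$ are of the same uncontrolled type. Lemma~\ref{lem:omega-convexified} only bounds the total range $\Omega_2(G,K)$, which is far coarser than the $\alpha/2$ precision you need. As written, the proof rests on the step you ``grant.''

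The missing idea is to reverse the order within each class. Take $\langle (y_{j_m},z_{j_m}),\dots,(y_{j_1},z_{j_1})\rangle$, so the predecessors of $(y_{j_q},z_{j_q})$ are the pairs with $\ell>q$, and evaluate at $x_{j_q}$. Since $j_q<j_\ell$, separation now gives the controlled direction: $\Phi_{j_\ell}(x_{j_q})\le s_{j_\ell}-2\alpha\le s_{j_q}-3\alpha/2$. Equivalently, $f_{y_{j_\ell}}(x_{j_q})\ge e^{3\alpha/2-s_{j_q}}f_{z_{j_\ell}}(x_{j_q})$.

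This inequality is linear in the pair $(y_{j_\ell},z_{j_\ell})$, so it survives averaging with common coefficients. That is precisely why the synchronized (diagonal) notion is the right one. Averaging yields $\Phi_{\bar y,\bar z}(x_{j_q})\le \Phi_{j_q}(x_{j_q})-3\alpha/2$, and Lemma~\ref{lem:synchronized-ratio-estimate} at $x_{j_q}$ gives the $3\alpha/4$ diagonal separation.

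Definition~\ref{def:diagonal-packing} only asks for some ordering of the pairs, so each reversed class still has length at most $\Mconv_{\mathrm{diag},H}(K^\polar,G^\polar;3\alpha/4)$. Your guess $3\alpha/4=\tfrac12(2\alpha-\alpha/2)$ is right; it just arises in the reversed direction.
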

%-----------------------------------------------------------------------

%-----------------------------------------------------------------------
\begin{proof}
Let $x_1,\dots,x_N\in G$ be Hilbert-convexified $\alpha$-separated. The assertion is immediate when $N = 1$, so assume that $N \geq 2$. For each $2 \leq j \leq N$, apply Lemma~\ref{lem:finite-two-pole-separation} to choose $y_j, z_j \in K^\polar$ such that for all $i < j$,
\[
    \Phi_{y_j,z_j}^K(x_j) - \Phi_{y_j,z_j}^K(x_i)
        ~ \geq ~ 2 \alpha.
\]
Set $s_j := \Phi_{y_j,z_j}^K(x_j)$. Since $\Phi_{y_j,z_j}^K(x_i) = \log\frac{f_{z_j}(x_i)}{f_{y_j}(x_i)}$,
the above inequality is equivalent to
\begin{equation}
    f_{y_j}(x_i)
        ~ \geq ~ e^{2\alpha - s_j}f_{z_j}(x_i),
        \qquad\text{for $i<j$}. \label{eq:scale-selected-witness-inequality}
\end{equation}
By the definition of $\Omega_2(G,K)$, the numbers $s_2,\dots,s_N$ lie in an interval of length at most $\Omega_2(G,K)$. Partition this interval into subintervals of length at most $\alpha/2$. The number $L$ of subintervals may be chosen so that
\[
    L
        ~ \leq ~ 1 + \frac{2\,\Omega_2(G,K)}{\alpha}
        ~ \leq ~ 2 \left( 1 + \frac{\Omega_2(G,K)}{\alpha} \right)
        ~ \leq ~ 10 \, \Mconv_H(K^\polar, G^\polar; \alpha),
\]
where the last inequality follows from Lemma~\ref{lem:omega-convexified}. 

Fix one nonempty scale class and write its indices in increasing order as $j_1 < \dots < j_m$. Since the corresponding values $s_{j_q}$ lie in an interval of length at most $\alpha/2$, there exists $t \in \RE$ such that
\[
    \abs{s_{j_q}-t}
        ~ \leq ~ \frac{\alpha}{4},
        \qquad\text{for $1 \leq q \leq m$}.
\]
We assert that the reverse-ordered sequence of witness pairs $\ang{(y_{j_m}, z_{j_m}), \dots, (y_{j_1}, z_{j_1})}$ is diagonally convexified $3\alpha/4$-separated. To show this, fix $1 \leq q < m$, and choose arbitrary nonnegative coefficients $\ang{\lambda_{q+1}, \dots, \lambda_m}$ summing to $1$ and define the convex combinations
\[
        \bar y  ~ := ~ \sum\nolimits_{\ell=q+1}^m \lambda_{\ell} y_{j_{\ell}}
        \qquad\text{and}\qquad
        \bar z ~ := ~ \sum\nolimits_{\ell=q+1}^m \lambda_{\ell} z_{j_{\ell}}
\]
of the pairs that precede $(y_{j_q},z_{j_q})$ in the reversed ordering. Because $j_q < j_{\ell}$, Eq.~\eqref{eq:scale-selected-witness-inequality}, applied with $i = j_q$ and $j = j_{\ell}$, gives
\[
    f_{y_{j_{\ell}}}(x_{j_q})
        ~ \geq ~ e^{2\alpha - s_{j_{\ell}}} f_{z_{j_{\ell}}}(x_{j_q})
        ~ \geq ~ e^{2\alpha - t - \frac{\alpha}{4}} f_{z_{j_{\ell}}}(x_{j_q}).
\]
Since $f_y(x)$ is affine in $y$, averaging these inequalities with the coefficients $\lambda_{\ell}$ yields
\[
    f_{\bar y}(x_{j_q})
        ~ \geq ~ e^{2\alpha - t - \frac{\alpha}{4}} f_{\bar z}(x_{j_q}).
\]
On the other hand,
\[
    \frac{f_{y_{j_q}}(x_{j_q})}{f_{z_{j_q}}(x_{j_q})}
        ~ = ~ e^{-s_{j_q}}
        ~ \leq ~ e^{-t + \frac{\alpha}{4}}.
\]
Consequently,
\[
    \Phi_{y_{j_q},z_{j_q}}^K(x_{j_q}) - \Phi_{\bar y,\bar z}^K(x_{j_q})
        ~ = ~ \log\left( \frac{f_{\bar y}(x_{j_q})/f_{\bar z}(x_{j_q})}{f_{y_{j_q}}(x_{j_q})/f_{z_{j_q}}(x_{j_q})}
        \right)
        ~ \geq ~ \frac{3\alpha}{2}.
\]
Lemma~\ref{lem:synchronized-ratio-estimate}, applied at $x_{j_q}$, therefore gives
\[
    \distHilb[G^\polar](\bar y,y_{j_q}) + \distHilb[G^\polar](\bar z,z_{j_q})
        ~ \geq ~ \frac{3\alpha}{4}.
\]
Since the coefficients were arbitrary, this proves the claim. Hence
\[
    m
        ~ \leq ~ \Mconv_{\mathrm{diag},H} \!\left( K^\polar, G^\polar; \frac{3\alpha}{4} \right).
\]
There are at most $L$ nonempty scale classes. Accounting separately for $x_1$, which has no associated witness pair, gives
\[
    N
        ~ \leq ~ 1 + L \, \Mconv_{\mathrm{diag},H} \!\left( K^\polar, G^\polar; \frac{3\alpha}{4} \right)
        ~ \leq ~ 1 + 10 \, \Mconv_H(K^\polar,G^\polar; \alpha) \, \Mconv_{\mathrm{diag},H} \!\left( K^\polar, G^\polar; \frac{3\alpha}{4} \right).
\]
Taking the supremum over all Hilbert-convexified $\alpha$-separated sequences in $G$ proves the result. 
\end{proof}
%-----------------------------------------------------------------------

%-----------------------------------------------------------------------
\subsection{The Diagonal Product Estimate} \label{sec:diagonal-product-estimate}
%-----------------------------------------------------------------------

We next bound the size of diagonal convexified packings. We first partition one coordinate with an ordinary packing net. Then within each part, we show that the convexity of Hilbert balls forces the other coordinate to form a convexified packing.

%-----------------------------------------------------------------------
\begin{lemma}[Mixed diagonal product bound] \label{lem:mixed-diagonal-product-bound}
For every $\eta > 0$,
\[
    \Mconv_{\mathrm{diag},H}(K^\polar, G^\polar; \eta)
        ~ \leq ~ \Mord_H \!\left( K^\polar, G^\polar; \frac{\eta}{4} \right) \Mconv_H \!\left( K^\polar, G^\polar; \frac{\eta}{2} \right).
\]
\end{lemma}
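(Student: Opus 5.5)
Let $\ang{(y_1,z_1),\dots,(y_N,z_N)}$ be a diagonally convexified $\eta$-separated sequence in $K^\polar\times K^\polar$, as in Definition~\ref{def:diagonal-packing}. The plan is to split the indices into classes according to the $y$-coordinates, so that in each class the $y$-part of the separation is small, and then show the $z$-coordinates within a class form a convexified packing.

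\emph{Partition step.} Take a maximal $\eta/4$-separated subset $\{w_1,\dots,w_P\}$ of $K^\polar$ in the Hilbert geometry of $G^\polar$, so $P \le \Mord_H(K^\polar,G^\polar;\eta/4)$. By maximality, every $y\in K^\polar$ satisfies $\distHilb[G^\polar](y,w_k)<\eta/4$ for some $k$. Assign each index $i$ to one such $k$; this splits $\{1,\dots,N\}$ into at most $P$ classes. Within a class, all $y_i$ lie in the open Hilbert ball $B_k$ of radius $\eta/4$ about $w_k$.

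\emph{Convexity step.} Fix a class with indices $i_1<\dots<i_m$. Hilbert balls are convex: the function $u\mapsto\distHilb[G^\polar](w,u)$ is quasiconvex, because each Funk ball is an intersection of half-space conditions $f_y(u)\le e^{r}f_y(w)$ and each reverse Funk ball is convex (it is a homothet of $G^\polar$). Now take any convex combination $\bar y=\sum_{\ell<q}\lambda_\ell y_{i_\ell}$ and the same-coefficient $\bar z$; both lie in the relevant class data. Then $\bar y\in B_k$, so by the triangle inequality $\distHilb[G^\polar](\bar y,y_{i_q})<\eta/2$. Extend the coefficients by zero to all indices $i<i_q$; this is a valid choice in the diagonal separation condition. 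That condition gives $\distHilb[G^\polar](\bar z,z_{i_q})>\eta/2$. Since the $\lambda$ were arbitrary, $\distHilb[G^\polar](\conv\{z_{i_1},\dots,z_{i_{q-1}}\},z_{i_q})\ge\eta/2$. So the $z$'s of the class form a Hilbert-convexified $\eta/2$-separated sequence in $K^\polar$, and $m\le\Mconv_H(K^\polar,G^\polar;\eta/2)$.

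Summing over the at most $P$ classes gives $N\le \Mord_H(K^\polar,G^\polar;\eta/4)\,\Mconv_H(K^\polar,G^\polar;\eta/2)$. Taking the supremum over $N$ gives the stated bound, and it also shows the diagonal number is finite.

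The main obstacle is justifying convexity of Hilbert balls, which is what lets the $y$-error stay below $\eta/2$ after convexification. I would prove it via the Funk description: $\distHilb{}\le r$ fails to be a simple intersection of the two Funk balls, so I would instead use the known fact that Hilbert balls are convex (see \cite{PaT14}). The alternative is to show directly that $u\mapsto \sup_{y,z}\log\frac{f_y(w)f_z(u)}{f_y(u)f_z(w)}$ has convex sublevel sets, because each constraint $f_z(u)\le e^{2r}\frac{f_z(w)}{f_y(w)}f_y(u)$ is linear in $u$.
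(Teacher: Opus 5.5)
Your proposal is correct and follows the paper's proof essentially step for step. Both use a maximal $\eta/4$-separated net on the $y$-coordinates, then convexity of open Hilbert balls plus the triangle inequality to get $\distHilb[G^\polar](\bar y, y_{i_q}) < \eta/2$, and then diagonal separation to force the $z$'s in each block to be convexified $\eta/2$-separated. Your first justification of ball convexity does not suffice, since convexity of the two Funk balls does not give convexity of their "sum" (as you note), but both fallbacks are valid: the cited standard fact (which is what the paper uses), or the observation that each constraint $f_z(u)\,f_y(w) \le e^{2r} f_z(w)\, f_y(u)$ is affine in $u$.
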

%-----------------------------------------------------------------------

%-----------------------------------------------------------------------
\begin{proof}
Let $\ang{(y_1,z_1),\dots,(y_N,z_N)}$ be a diagonally convexified $\eta$-separated sequence from $K^\polar \times K^\polar$, and set $r := \eta/4$. Select a maximal Hilbert $r$-separated subset $\mathcal{N} \subset K^\polar$. Clearly,
\[
    |\mathcal{N}|
        ~ \leq ~ \Mord_H(K^\polar, G^\polar; r).
\]
Moreover, maximality implies that $\mathcal{N}$ is an open $r$-net for $K^\polar$, that is, for every $y \in K^\polar$, there exists $u \in \mathcal{N}$ such that $\distHilb[G^\polar](y,u) < r$.

For each $j$, choose $u(j)\in\mathcal{N}$ such that $\distHilb[G^\polar](y_j,u(j)) < r$ and partition the indices according to the value of $u(j)$. Fix one nonempty block whose indices in their original order are $j_1 < \dots < j_m$, and let $u \in \mathcal{N}$ be their common net point. We claim that $\ang{z_{j_1}, \dots, z_{j_m}}$ is a convexified $\eta/2$-separated sequence in the Hilbert geometry of $G^\polar$. To see this, fix $2 \leq q \leq m$, arbitrary nonnegative coefficients $\ang{\lambda_1, \dots, \lambda_{q-1}}$ summing to $1$, and define the convex combinations
\[
        \bar y ~ := ~ \sum\nolimits_{\ell=1}^{q-1} \lambda_{\ell} y_{j_{\ell}}
        \qquad\text{and}\qquad
        \bar z ~ := ~ \sum\nolimits_{\ell=1}^{q-1} \lambda_{\ell} z_{j_{\ell}}.
\]

It is well known that open balls in any Hilbert geometry are convex (see, e.g., \cite[Section~18]{Bus55}). Since every $y_{j_{\ell}}$ belongs to the open ball of radius $r$ centered at $u$, so does $\bar y$. Thus $\distHilb[G^\polar](\bar y,u) < r$. Together with the fact that  $\distHilb[G^\polar](y_{j_q},u) < r$, the triangle inequality gives
\[
    \distHilb[G^\polar](\bar y, y_{j_q})
        ~ < ~ 2 r
        ~ = ~ \frac{\eta}{2}.
\]
Diagonal separation now implies
\[
    \distHilb[G^\polar](\bar z, z_{j_q})
        ~ > ~ \frac{\eta}{2}.
\]
Since this holds for every convex combination $\bar z$ of $z_{j_1},\dots,z_{j_{q-1}}$, it follows that
\[
    \distHilb[G^\polar]\bigl(\conv\{z_{j_1}, \dots, z_{j_{q-1}}\},z_{j_q}\bigr)
        ~ \geq ~ \frac{\eta}{2}.
\]
The non-strict inequality follows upon taking the infimum over the preceding convex hull. Therefore, the $z_{j_{\ell}}$'s form a convexified $\eta/2$-separated sequence, as desired, implying that
\[
    m
        ~ \leq ~ \Mconv_H \!\left( K^\polar, G^\polar; \frac{\eta}{2} \right).
\]
As there are at most $|\mathcal{N}| \leq \Mord_H(K^\polar, G^\polar; \eta/4)$ blocks, we have
\[
    N
        ~ \leq ~ \Mord_H \!\left( K^\polar, G^\polar; \frac{\eta}{4} \right) \Mconv_H \!\left( K^\polar, G^\polar; \frac{\eta}{2} \right).
\]
Taking the supremum over all diagonally convexified $\eta$-separated sequences proves the lemma.
\end{proof}
%-----------------------------------------------------------------------

The following consequence is immediate from monotonicity in the separation parameter and the inequality $\Mconv_H \leq \Mord_H$.

%-----------------------------------------------------------------------
\begin{corollary} \label{cor:ordinary-diagonal-product-bound}
For every $\eta > 0$,
\[
    \Mconv_{\mathrm{diag},H}(K^\polar, G^\polar; \eta)
        ~ \leq ~ \Mord_H \!\left( K^\polar, G^\polar; \frac{\eta}{4} \right)^{\! 2}.
\]
\end{corollary}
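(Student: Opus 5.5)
This follows directly from Lemma~\ref{lem:mixed-diagonal-product-bound}, using two elementary comparisons among the packing quantities. We do not need to analyze diagonal sequences again. We only need to bound the convexified factor $\Mconv_H(K^\polar, G^\polar; \eta/2)$ by the ordinary factor $\Mord_H(K^\polar, G^\polar; \eta/4)$.

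First, we use the inequality $\Mconv_H \leq \Mord_H$ from Section~\ref{sec:packing-numbers}, applied with $G^\polar$ as the ambient body and $K^\polar$ as the compact subset. This is legitimate because $0 \in \interior K^\polar$ and $K^\polar \subset \interior G^\polar$. It gives
\[
    \Mconv_H\!\left(K^\polar, G^\polar; \frac{\eta}{2}\right) ~ \leq ~ \Mord_H\!\left(K^\polar, G^\polar; \frac{\eta}{2}\right).
\]
The reason is that any point at distance at least $\eta/2$ from the convex hull of its predecessors is at distance at least $\eta/2$ from each predecessor.

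Second, we use that $\Mord_H$ is nonincreasing in the separation parameter. Since $\eta/4 \leq \eta/2$, every $\eta/2$-separated set is also $\eta/4$-separated, so
\[
    \Mord_H\!\left(K^\polar, G^\polar; \frac{\eta}{2}\right) ~ \leq ~ \Mord_H\!\left(K^\polar, G^\polar; \frac{\eta}{4}\right).
\]
Substituting these two inequalities into the bound of Lemma~\ref{lem:mixed-diagonal-product-bound} turns the product into $\Mord_H(K^\polar, G^\polar; \eta/4)^2$, which is the claim.

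There is no real obstacle. The one point to check is that both quantities are finite, so that multiplying the inequalities is valid. Finiteness holds because $K^\polar$ is a compact subset of $\interior G^\polar$, as noted in Section~\ref{sec:packing-numbers}. In particular, the corollary also shows that $\Mconv_{\mathrm{diag},H}$ is finite.
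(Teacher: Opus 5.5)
Your proposal is correct and follows essentially the same route as the paper. The paper also obtains the corollary directly from Lemma~\ref{lem:mixed-diagonal-product-bound}, using $\Mconv_H(K^\polar, G^\polar; \eta/2) \leq \Mord_H(K^\polar, G^\polar; \eta/2) \leq \Mord_H(K^\polar, G^\polar; \eta/4)$, which comes from $\Mconv_H \leq \Mord_H$ and monotonicity in the separation parameter.
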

%-----------------------------------------------------------------------

%-----------------------------------------------------------------------
\subsection{The Mixed Primal--polar Bound} \label{sec:mixed-primal-polar-bound}
%-----------------------------------------------------------------------

By combining the scale-selected reduction with the diagonal product estimate, we can now prove the main theorem with the constants explicitly given. 

%-----------------------------------------------------------------------
\begin{lemma}
For every pair of convex bodies $G$ and $K$ in $\RE^d$ satisfying $0 \in \interior G$ and $G \subset \interior K$ and every $\alpha > 0$,
\[
    \Mconv_H(G, K; \alpha)
        ~ \leq ~ 11 \, \Mconv_H \!\left( K^\polar, G^\polar; \frac{3\alpha}{16} \right)^{\! 2} \Mord_H \!\left( K^\polar, G^\polar; \frac{3\alpha}{16} \right).
\]
\end{lemma}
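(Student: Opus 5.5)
The plan is to chain Lemma~\ref{lem:scale-selected-diagonal-reduction} with Lemma~\ref{lem:mixed-diagonal-product-bound}, then use monotonicity in the separation parameter to bring every polar factor to the common scale $3\alpha/16$. Finally, the additive constant $1$ is absorbed into the multiplicative constant.

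First, apply Lemma~\ref{lem:scale-selected-diagonal-reduction}:
\[
    \Mconv_H(G,K;\alpha) ~ \leq ~ 1 + 10\,\Mconv_H(K^\polar,G^\polar;\alpha)\,\Mconv_{\mathrm{diag},H}\!\left(K^\polar,G^\polar;\tfrac{3\alpha}{4}\right).
\]
Next, apply Lemma~\ref{lem:mixed-diagonal-product-bound} with $\eta = 3\alpha/4$. This gives
\[
    \Mconv_{\mathrm{diag},H}\!\left(K^\polar,G^\polar;\tfrac{3\alpha}{4}\right) ~ \leq ~ \Mord_H\!\left(K^\polar,G^\polar;\tfrac{3\alpha}{16}\right)\Mconv_H\!\left(K^\polar,G^\polar;\tfrac{3\alpha}{8}\right).
\]
Both packing numbers are nonincreasing in the separation parameter, and $3\alpha/16 \leq 3\alpha/8 \leq \alpha$. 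Hence $\Mconv_H(K^\polar,G^\polar;\alpha)$ and $\Mconv_H(K^\polar,G^\polar;3\alpha/8)$ are each at most $\Mconv_H(K^\polar,G^\polar;3\alpha/16)$. Writing $A := \Mconv_H(K^\polar,G^\polar;3\alpha/16)$ and $B := \Mord_H(K^\polar,G^\polar;3\alpha/16)$, the two bounds combine to give $\Mconv_H(G,K;\alpha) \leq 1 + 10A^2B$.

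It remains to absorb the $1$. Both $A$ and $B$ are at least $1$, since $K^\polar$ is nonempty and any single point is trivially a (convexified) packing. Therefore $1 \leq A^2B$, and $1 + 10A^2B \leq 11A^2B$, which is the claimed bound.

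There is no real obstacle here; the only points needing care are the direction of monotonicity and the check that $A, B \geq 1$. If one wants Lemma~\ref{lem:mixed-diagonal-product-bound} to be unambiguous, also note that $\Mconv_{\mathrm{diag},H}$ is finite. This follows from that lemma itself, since the packing numbers on its right-hand side are finite.
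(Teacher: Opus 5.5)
Your proposal is correct and matches the paper's proof essentially step for step. You chain Lemma~\ref{lem:scale-selected-diagonal-reduction} with Lemma~\ref{lem:mixed-diagonal-product-bound} at $\eta = 3\alpha/4$, use monotonicity to bring both convexified factors down to scale $3\alpha/16$, and absorb the additive $1$ into the constant $11$ since every packing number is at least $1$.
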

%-----------------------------------------------------------------------

%-----------------------------------------------------------------------
\begin{proof}
Applying Lemmas~\ref{lem:scale-selected-diagonal-reduction} and~\ref{lem:mixed-diagonal-product-bound} with $\eta := 3\alpha/4$, we obtain
\begin{align*}
    \Mconv_H(G, K; \alpha)
        & ~ \leq ~ 1 + 10 \, \Mconv_H(K^\polar, G^\polar; \alpha) \, \Mconv_{\mathrm{diag},H} \!\left( K^\polar, G^\polar; \frac{3\alpha}{4} \right) \\
        & ~ \leq ~ 1 + 10 \, \Mconv_H(K^\polar, G^\polar; \alpha) \, \Mconv_H \!\left( K^\polar, G^\polar; \frac{3\alpha}{16} \right) \Mord_H \!\left( K^\polar, G^\polar; \frac{3\alpha}{16} \right),
\end{align*}
where, in applying Lemma~\ref{lem:mixed-diagonal-product-bound}, we have used the monotonicity of separation numbers with respect to the distance parameter. 

Set $c := 3/16$. Since $c \alpha \leq \alpha$, monotonicity implies that
\[
    \Mconv_H(K^\polar, G^\polar; \alpha)
        ~ \leq ~ \Mconv_H(K^\polar, G^\polar; c\alpha).
\]
Therefore
\[
    \Mconv_H(G, K; \alpha)
        ~ \leq ~ 1 + 10 \, \Mconv_H(K^\polar, G^\polar; c\alpha)^2 \Mord_H(K^\polar, G^\polar; c\alpha).
\]
Each packing number in the product is at least $1$, so the additive term is absorbed by increasing the constant from $10$ to $11$. This proves the theorem with $C = 11$. 
\end{proof}
%-----------------------------------------------------------------------

This proves Theorem~\ref{thm:main} for the constants $C = 11$ and $c = 3/16$. Since convexified separation numbers cannot exceed ordinary separation numbers, we obtain the following corollary, which avoids the mixed product.

%-----------------------------------------------------------------------
\begin{corollary} \label{cor:ordinary-cubic-bound}
For every pair of convex bodies $G$ and $K$ in $\RE^d$ satisfying $0 \in \interior G$ and $G \subset \interior K$ and every $\alpha > 0$,
\[
    \Mconv_H(G, K; \alpha)
        ~ \leq ~ 11 \, \Mord_H \!\left( K^\polar, G^\polar; \frac{3\alpha}{16} \right)^{\! 3}.
\]
\end{corollary}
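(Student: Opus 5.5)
The plan is to start from the bound in the preceding lemma,
\[
    \Mconv_H(G, K; \alpha)
        ~ \leq ~ 11 \, \Mconv_H \!\left( K^\polar, G^\polar; \tfrac{3\alpha}{16} \right)^{\! 2} \Mord_H \!\left( K^\polar, G^\polar; \tfrac{3\alpha}{16} \right),
\]
which already holds with the same constants $C = 11$ and $c = 3/16$. The only remaining task is to remove the two convexified factors on the right-hand side.

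We do this pointwise in the separation parameter. Set $\beta := 3\alpha/16 > 0$. As noted in Section~\ref{sec:packing-numbers}, every Hilbert-convexified $\beta$-separated sequence in $K^\polar$ is an ordinary $\beta$-packing. Indeed, if $i<j$, then $x_i \in \conv\{x_1,\dots,x_{j-1}\}$, so
\[
    \distHilb[G^\polar](x_i,x_j) ~ \geq ~ \distHilb[G^\polar]\bigl(x_j,\conv\{x_1,\dots,x_{j-1}\}\bigr) ~ \geq ~ \beta .
\]
Hence $\Mconv_H(K^\polar,G^\polar;\beta) \leq \Mord_H(K^\polar,G^\polar;\beta)$. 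This inequality is applied to the reversed polar pair, which is legitimate because $0 \in \interior K^\polar$ and $K^\polar \subset \interior G^\polar$. All quantities involved are finite positive integers, so squaring the inequality and multiplying by the remaining factor $\Mord_H(K^\polar,G^\polar;\beta)$ preserves it. Substituting into the displayed bound gives $11\,\Mord_H(K^\polar,G^\polar;\beta)^3$, which is the claimed corollary.

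There is no genuine obstacle here. The only points to check are that the main lemma is invoked with exactly the separation parameter $3\alpha/16$ appearing in both factors, so that no further monotonicity adjustment is needed, and that the elementary comparison $\Mconv_H \leq \Mord_H$ is applied in the polar geometry of $G^\polar$ rather than in that of $K$.
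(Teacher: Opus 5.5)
Your proposal is correct and follows the paper's argument: the corollary comes from the preceding lemma (with $C = 11$ and $c = 3/16$) by replacing each factor $\Mconv_H(K^\polar, G^\polar; 3\alpha/16)$ with the larger ordinary packing number $\Mord_H(K^\polar, G^\polar; 3\alpha/16)$. Your check that a convexified $\beta$-separated sequence is an ordinary $\beta$-packing, because $x_i$ lies in the preceding hull, is exactly the fact the paper uses without writing it out.
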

%-----------------------------------------------------------------------

%=======================================================================
\section{Concluding Remarks and Open Problems} \label{sec:conclusion}
%=======================================================================

Our main theorem gives a dimension-free primal--polar estimate for Hilbert convexified packing. Its mixed form reflects the two components of the proof. The scale decomposition contributes one convexified polar packing factor, while the diagonal product estimate contributes one convexified and one ordinary polar packing factor. Thus, the ordinary packing term enters only at the final step, where one coordinate of the polar witness pairs is partitioned using an ordinary packing net. 

The mixed form is nevertheless sufficient for convexified packing to serve as a bridge to ordinary packing duality. Indeed, suppose that there exist absolute constants $C_0, c_0 > 0$ and $q \geq 1$ such that
\begin{equation}
    \Mord_H(G, K; \alpha)
        ~ \leq ~ C_0 \, \Mconv_H(G, K; c_0 \alpha)^q, \label{eq:conclusion-ordinary-by-convexified}
\end{equation}
for every admissible pair $G \subset\interior K$. Applying Theorem~\ref{thm:main} to the right-hand side of Eq.~\eqref{eq:conclusion-ordinary-by-convexified}, and then using $\Mconv_H \leq \Mord_H$, would yield
\[
    \Mord_H(G, K; \alpha)
        ~ \leq ~ C_1\, \Mord_H(K^\polar, G^\polar; c_1\alpha)^{3 q},
\]
for suitable absolute constants $C_1, c_1 > 0$. Consequently, the presence of one ordinary polar packing factor does not obstruct its role as the duality bridge. The remaining task would be to establish the intrinsic estimate~\eqref{eq:conclusion-ordinary-by-convexified}, which concerns only one Hilbert geometry at a time and constitutes the second, geometric, stage of the program. 

A separate question is whether the mixed diagonal product estimate can itself be made fully convexified.

%-----------------------------------------------------------------------
\begin{problem}[Fully convexified diagonal product bound] \label{prob:fully-convexified-diagonal-bound}
Do there exist absolute constants $C,c>0$ such that, for every admissible pair $G \subset \interior K$ and every $\eta > 0$, the following holds?
\[
    \Mconv_{\mathrm{diag},H}(K^\polar, G^\polar; \eta)
        ~ \leq ~ C\, \Mconv_H(K^\polar, G^\polar; c\eta)^2.
\]
\end{problem}
%-----------------------------------------------------------------------

An affirmative answer would replace the ordinary packing factor in Theorem~\ref{thm:main} by a third convexified factor and would give
\[
    \Mconv_H(G,K; \alpha)
        ~ \leq ~ C\, \Mconv_H(K^\polar, G^\polar; c\alpha)^3.
\]
The difficulty is that diagonal convexification requires the same coefficients in the two polar coordinates. The proof of Lemma~\ref{lem:mixed-diagonal-product-bound} avoids this synchronization problem by bounding one coordinate through an ordinary-packing net. It remains to determine whether this use of ordinary packing is essential or merely a feature of the present argument. 

%=======================================================================
\section*{Acknowledgments}
%=======================================================================

The authors acknowledge the use of generative AI tools in the preparation of this manuscript. The authors take full responsibility for its content.

%=======================================================================
% Bibliography
%=======================================================================

\end{document}